\theoremstyle{plain}
\newtheorem{theorem}{Theorem}
\newtheorem{corollary}[theorem]{Corollary}
\newtheorem{lemma}[theorem]{Lemma}
\newtheorem{prop}[theorem]{Proposition}
\theoremstyle{definition}
\newtheorem{definition}{Definition}
\newtheorem{example}{Example}
\title{Discreet Coin Weighings and the Sorting Strategy}
\author{Tanya Khovanova \and Rafael M. Saavedra}
\date{}
\begin{document}

\maketitle

\begin{abstract}
In 2007, Alexander Shapovalov posed an old twist on the classical coin weighing problem by asking for strategies that manage to conceal the identities of specific coins while providing general information on the number of fake coins. In 2015, Diaco and Khovanova studied various cases of these ``discreet strategies" and introduced the revealing factor, a measure of the information that is revealed.

In this paper we discuss a natural coin weighing strategy which we call the sorting strategy: divide the coins into equal piles and sort them by weight. We study the instances when the strategy is discreet, and given an outcome of the sorting strategy, the possible number of fake coins. We prove that in many cases, the number of fake coins can be any value in an arithmetic progression whose length depends linearly on the number of coins in each pile. We also show the strategy can be discreet when the number of fake coins is any value within an arithmetic subsequence whose length also depends linearly on the number of coins in each pile. We arrive at these results by connecting our work to the classic Frobenius coin problem. In addition, we calculate the revealing factor for the sorting strategy.
\end{abstract}

\section{Introduction}

In 2007, Alexander Shapovalov posed a novel variation on the traditional coin weighing problem for the International Kolmogorov Math Tournament~\cite{ShapovalovProblem}. In traditional coin weighing problems, one is tasked with identifying fake coins of lesser weight amongst a collection of indistinguishable coins through the use of a balance scale, minimizing the number of instances that the balance scale is employed. However, in Shapovalov's problem, which we call the \emph{discreet coin weighing problem}, a lawyer with full knowledge of the identities of the coins must convey information to an observer through a demonstration with a balance scale about the number of fake coins and yet simultaneously keep secret whether any specific coins are fake or not. Such a sequence of weighings is said to be a \emph{discreet weighing strategy}.

Discreet strategies were first investigated by Knop~\cite{Knop} and then by Diaco and Khovanova~\cite{Diaco-Khovanova}. In their paper, Diaco and Khovanova studied, for various triples of integers $(t,f,d)$, discreet weighing strategies that demonstrate that among $t$ total coins, the number of fake coins is not $d$ and may be $f$. Diaco and Khovanova showed that for certain $(t,f,d)$, there exist no discreet weighing strategies. Additionally,~\cite{Diaco-Khovanova} introduced the \emph{revealing factor}, a measure of how much information a weighing strategy reveals to an observer apart from the information that it is designed to reveal, and investigated weighing strategies with minimal revealing factor. A reduced version of~\cite{Diaco-Khovanova} was published in~\cite{Diaco-KhovanovaPaper}. In~\cite{DiacoPreprint}, Diaco employed a new formalism and notation, extended some results of~\cite{Diaco-Khovanova}, and discussed related coin problems.

The central attention of the paper is a class of weighings which we call the \emph{sorting strategy}. The sorting strategy consists of dividing all $t$ coins into $p$ piles of equal size and totally ordering all piles by weight. We describe the order relations of the piles by a \emph{sorting sequence}. In our work, we shift our attention from attempting to simply show, for a given $d$ and $f$, the impossibility of $d$ and the possibility of $f$ for the number of fake coins as was done in~\cite{Diaco-Khovanova,DiacoPreprint}. We analyze all possible numbers of fake coins consistent with a given sorting sequence, and for which values the strategy is discreet. We also calculate the revealing factor.

Our main results are that when there are sufficiently many coins, the possible number of fake coins can be any value within an arithmetic progression, and that for an arithmetic subsequence of that progression, the strategy is discreet. These arithmetic progressions increase linearly with the sizes of the piles. Consequently, the possible set of values for the number of fake coins can be divided into three parts, in which the first and third parts consist of fixed quantities of exceptional values, while the central set is an arithmetic progression that increases when the total number of coins increases. To prove the existence of this arithmetic progression of possible values, we employ the theory of the \emph{Frobenius coin problem} as well as a \emph{redistribution procedure} for the fake coins.

We begin our paper by clarifying the concept of discreetness in Section~\ref{sec:preliminaries} and reviewing the basic definitions and conventions for the discreet coin weighing problem. Because the object of our work is to find all possible values for the number of fake coins, we emphasize the dependence of the concept of discreetness on the value being proved.

In Section~\ref{sec:sorting} we develop the fundamentals to study the sorting strategy. We introduce the crucial difference between general solutions and those that also satisfy a specified height bound to model the sorting strategy algebraically. We prove necessary and sufficient conditions on when the sorting strategy is discreet. One can find several illustrative examples as well as some numerical data on the values for the possible number of fake coins provided in Table~\ref{table:sorting-sequence}. We finish by calculating the minimum and maximum values for the number of fake coins for a given sorting sequence and the minimum and maximum values for which it is discreet.

The algebraic treatment of the solutions for the sorting sequence is developed further in Section~\ref{sec:duality-frobenius}. A basic introduction to the \emph{Frobenius coin problem}, a classic topic of research in combinatorics, is given in subsection~\ref{sec:frobenius}, and we explain its connection to the sorting strategy.

In Section~\ref{sec:redistribution} we utilize combinatorial methods of coin redistribution to show conditions that allow us to deal with the height bound and thus prove that there exist configurations with a certain number of fake coins.

We prove our central results on the existence of arithmetic progressions for the sorting strategy in Section~\ref{sec:arith-progressions}. We also utilize work on the Frobenius problem to calculate bounds on the endpoints of the arithmetic progressions.

Finally, in Section~\ref{sec:revealing-factor}, we provide a formula for the revealing factor for the sorting strategy.

Our main results demonstrate the resilience of the sorting strategy as a discreet weighing strategy. This constitutes an extension of the previous work on discreet weighing strategies in that we show not only how a strategy may protect the privacy of coins even when the number of fake coins is known, but also how the very number of fake coins may be obscured amongst an arbitrarily large range of values.

\section{Preliminaries}\label{sec:preliminaries}

In this section we establish the definitions and conventions employed in this paper.

We have a finite number of coins. Any real coins all weigh the same and any fake coins all weigh the same. Fake coins are lighter than real coins. All coins are outwardly indistinguishable but have been labeled so they may be kept track of. At our disposal is a balance scale on which we may place equal number of coins on each side, and which will indicate which side weighs less, or if they weigh the same.

A \emph{weighing strategy} is a specification of a sequence of weighings based on the labels of the coins.

\begin{definition}
A weighing strategy \emph{discreetly} proves a certain property $P$ if given an outcome for the weighing strategy and the information that $P$ is true, the identity of no specific coin can be concluded. We say that such a strategy is \emph{discreet}.
\end{definition}

In this paper we will consider, for a given weighing strategy, several possibilities for the number of fake coins. In~\cite{Diaco-Khovanova, DiacoPreprint}, the only aspects of a weighing strategy that were considered were, for a single triple $(t,f,d)$ whether there could be $f$ fake coins as opposed to $d$ fake coins out of $t$ total coins, even if there could also be $f'$ fake coins. The new definition makes it clear that discreetness for a given strategy depends on the specific property that is being proved, such as the various values for the number of fake coins that are possible.

In~\cite{Diaco-Khovanova}, the revealing factor was introduced to measure how much information a weighing strategy $\mathcal A$ revealed in the course of proving a property $P$ in addition to what knowing $P$ necessarily reveals. If $\mathcal A$ is a weighing strategy and $P$ a property about coins, then we let the \emph{old possibilities} be the set of coin configurations for which $P$ is true, and the \emph{new possibilities} be the set of coin configurations consistent with the outcome of $\mathcal A$ for which $P$ is also true.

\begin{definition}
Let $\mathcal A$ be a weighing strategy and $P$ a property of coins. If the number of new possibilities is not zero, then the \emph{revealing factor} is the ratio of the number of old to new possibilities: \[X = \frac{\# \text{ old possibilities}}{\# \text{ new possibilities}}.\]
\end{definition}

If the objective of a weighing strategy $\mathcal A$ is to prove a property $P$, then the revealing factor will measure the amount of information revealed by $\mathcal A$ relative to what knowing $P$ necessarily reveals.

In general, the revealing factor of a weighing strategy is not closely related to whether it is discreet. Indeed, as is shown in~\cite{Diaco-Khovanova}, sometimes, indiscreet strategies may have lower revealing factors that discreet strategies. However, there exists an upper bound for the revealing factor of a discreet strategy that is proven in~\cite{DiacoPreprint}.

\section{The sorting strategy: Several piles of equal size} \label{sec:sorting}

Suppose that we have $t=pk$ total coins, so that we can form $p$ piles of size $k$. The \emph{sorting strategy} is the weighing strategy where we compare all the piles in order to sort them in order of relative weight.

To describe the possible outcomes of the sorting strategy, that is, the relations between the piles, we mark the heaviest piles with 0, the second heaviest with 1 and so on, noting that the lighter the pile, the more fake coins it will contain. The relations of relative weight between the piles become encoded in a non-decreasing sequence of integers. We can formalize these sequences as follows:

\begin{definition}
A \textit{sorting sequence} is a non-decreasing sequence of finite length of non-negative integers, beginning with 0, in which each entry is equal to the previous or is greater by 1.
\end{definition}

We let $S_i$ be the $i$th sorting sequence of length $p$ under the lexicographic ordering, beginning with index~$0$.

\begin{lemma}
The number of sorting sequences of length $p$ is $2^{p-1}$.
\end{lemma}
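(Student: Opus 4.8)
The plan is to set up a bijection between sorting sequences of length $p$ and subsets of a $(p-1)$-element set, which immediately yields the count $2^{p-1}$. By definition, a sorting sequence $(a_0, a_1, \dots, a_{p-1})$ satisfies $a_0 = 0$ and each successive difference $a_{i} - a_{i-1} \in \{0, 1\}$ for $1 \le i \le p-1$. So the sequence is completely determined by the string of differences $(d_1, \dots, d_{p-1}) \in \{0,1\}^{p-1}$, and conversely every such string of differences, together with the forced initial value $a_0 = 0$, reconstructs a valid sorting sequence via the partial sums $a_i = d_1 + \dots + d_i$.

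First I would verify that the map $(a_0, \dots, a_{p-1}) \mapsto (d_1, \dots, d_{p-1})$ with $d_i = a_i - a_{i-1}$ is well-defined, i.e., lands in $\{0,1\}^{p-1}$; this is exactly the content of the ``equal to the previous or greater by 1'' clause in the definition. Next I would check that the partial-sum map $(d_1, \dots, d_{p-1}) \mapsto (0, d_1, d_1 + d_2, \dots, d_1 + \dots + d_{p-1})$ produces a non-decreasing sequence of non-negative integers starting at $0$ with unit-or-zero steps, hence a genuine sorting sequence; each step increment is $d_i \in \{0,1\}$, so all the required properties hold. Finally I would observe that these two maps are mutually inverse, so the correspondence is a bijection, and since $|\{0,1\}^{p-1}| = 2^{p-1}$, the lemma follows.

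The argument is essentially a routine encoding, so there is no real obstacle; the only thing to be slightly careful about is the edge case $p = 1$, where the sequence is just $(0)$, there are no differences to record, and the count is $2^{0} = 1$, which is consistent. One could alternatively phrase the proof by induction on $p$: a sorting sequence of length $p$ is obtained from one of length $p-1$ by appending either a repeat of the last entry or that entry plus one, giving two choices and hence the recurrence $N_p = 2 N_{p-1}$ with $N_1 = 1$. I would present the bijective proof as the main argument since it is cleaner and also makes the indexing convention for $S_i$ transparent, but mention the inductive viewpoint as a remark if space permits.
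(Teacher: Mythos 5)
Your proof is correct and follows the same idea as the paper's: the paper simply counts the $p-1$ binary choices (stay the same or increase by one) after the initial $0$, which is exactly the difference-string bijection you spell out more formally. No issues.
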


\begin{proof}
The sequence always starts with zero. After the first entry, each can either be the same or increase by one. This choice is made $p-1$ times. The number of such sequences is thus $2^{p-1}$.
\end{proof}

\begin{definition}
The \textit{binary representation} of $S_i$ of length $p$ is a binary string of length $p-1$ constructed from $S_i$ in which the $j$th digit is 0 if the $(j+1)$-st and $j$th entries of $S_i$ are the same, and 1 if they are different.
\end{definition}

We let $B_i$ be a non-negative integer $i < 2^{p-1}$ in binary padded with zeros from the left to make a string of length $p-1$.  Additionally, if $s$ is a binary sequence, then we denote the reverse sequence by $s'$.

\begin{prop}
The binary representation of the sequence $S_i$ is $B_i$.
\end{prop}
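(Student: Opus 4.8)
The plan is to show that the map $S_i \mapsto B_i$ defined by the binary representation is an order-isomorphism between the set of sorting sequences of length $p$ (under lexicographic order) and the set of binary strings of length $p-1$ (under lexicographic order, equivalently under the natural order on the integers $0 \le i < 2^{p-1}$ they represent). Since both sets have $2^{p-1}$ elements by the preceding lemma, and both the indexing $S_i$ and the notation $B_i$ are defined by going through these respective orderings starting at index $0$, it suffices to prove that the binary-representation map is a bijection that preserves order. The bijection part is essentially immediate: a sorting sequence is uniquely reconstructed from its binary representation by setting the first entry to $0$ and reading off the successive differences, so the map is invertible, and both sets have the same cardinality.

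The substance of the argument is the order-preservation. First I would record the inverse map explicitly: given a binary string $b = b_1 b_2 \cdots b_{p-1}$, the corresponding sorting sequence $S$ has entries $S(1) = 0$ and $S(j+1) = S(j) + b_j = \sum_{\ell=1}^{j} b_\ell$ for $1 \le j \le p-1$, i.e. the partial sums. Now take two distinct binary strings $b$ and $c$ and let $m$ be the first index at which they differ, say $b_m = 0$ and $c_m = 1$, so that $b < c$ lexicographically. Then the associated sorting sequences $S^b$ and $S^c$ agree on entries $1, \ldots, m$ (the partial sums up through index $m$ coincide because $b$ and $c$ agree on positions $1, \ldots, m-1$, and the $(m{+}1)$-st entries are the partial sums through position $m$, which also coincide since the differing bit is at position $m$)—wait, I need to be careful about indexing. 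With the convention above, entry $S(j+1)$ depends on $b_1, \ldots, b_j$, so $S^b$ and $S^c$ agree on entries $1, \ldots, m$ and first possibly differ at entry $m+1$, where $S^b(m+1) = \sum_{\ell=1}^m b_\ell$ and $S^c(m+1) = \sum_{\ell=1}^m c_\ell = S^b(m+1) + 1$. Hence $S^b(m+1) < S^c(m+1)$, so $S^b < S^c$ lexicographically. Thus the inverse map is order-preserving, and therefore so is the original map.

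Finally I would combine these facts: the binary-representation map is an order-isomorphism from sorting sequences of length $p$ to binary strings of length $p-1$. Under this isomorphism, the $i$th sorting sequence in lexicographic order corresponds to the $i$th binary string in lexicographic order; since lexicographic order on length-$(p-1)$ binary strings coincides with the numerical order on the integers they represent, the $i$th such string is exactly $B_i$. Therefore the binary representation of $S_i$ is $B_i$.

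I expect the only real subtlety to be bookkeeping with the index offsets—matching up "the $j$th digit" of the binary representation with "the $(j{+}1)$-st and $j$th entries" of the sorting sequence, and making sure the first point of disagreement in the binary strings corresponds correctly to the first point of disagreement (one position later) in the sorting sequences. There is no deep obstacle; the proof is a clean verification that the explicit reconstruction map is a well-defined, order-preserving bijection.
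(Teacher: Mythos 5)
Your proof is correct and takes essentially the same approach as the paper: both establish that the binary-representation map is an order-preserving bijection between two lexicographically ordered sets of equal cardinality $2^{p-1}$ and conclude that the indices must therefore match. The only cosmetic difference is that you verify order-preservation for the inverse map (binary strings to sorting sequences via partial sums), whereas the paper argues the forward direction; for a bijection of finite totally ordered sets these are equivalent.
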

\begin{proof}
First, we prove that if $a < b$, then the binary representation of $S_a$ precedes the binary representation of $S_b$. If $a<b$, then there is a smallest integer $j$ such that the $j$th entry of $S_a$ is smaller than the $j$th entry of $S_b$. Then the binary representations of $S_a$ and $S_b$ coincide until the $(j-1)$ entry is 0 for $S_a$ and 1 for $S_b$. Consequently, the binary representation of $S_a$ precedes the binary representation of $S_b$.

Consequently the map from $S_i$ to its binary representation is injective. Because the set of sorting sequences of length $p$ and the set of binary sequences of length $p$ have the same size, the map is bijective and the equality must hold for all $i$.
\end{proof}

We let $S_i'$ be the sorting sequence whose binary representation is the reverse of the binary representation of $S_i$.

Observe how the relations of relative weight between the piles in the sorting strategy are encoded in a sorting sequence:

\begin{example}\label{ex:sorting-sequence}
Suppose that $p=4$ and the sorting sequence is $S_3=(0,0,1,2)$. The binary representation of $S_3$ is $B_3=011$. We have four piles of equal size, $P_1, P_2, P_3, P_4$, with the following relations of relative weight according to the sequence: $P_1 = P_2 > P_3 > P_4$, so that $P_1$ and $P_2$ have the fewest fake coins, and $P_4$ the most.
\end{example}

We can use sorting sequences to algebraically describe the pile relations. Given the sorting sequence $S_i$, let $p_j$ be the number of $j$'s in $S_i$ so that $p_1 + \dotsb + p_r = p$, the total number of piles. Let $f_j$ be a possible number of fake coins in the piles corresponding to the $j$'s in the sorting sequence. Therefore, the possible total number of fake coins is 
\begin{equation}\label{partition-equation}
p_1 f_1 + \dotsb + p_r f_r = f \quad \quad \text{ with \quad $0 \leq f_1 < \cdots < f_r$}
\end{equation}
with the added conditions that $f_r \leq k$.

In the case when $r=1$, we have $p_1 = p$ and $0 \leq p_1f_1=f \leq pk$; the number of fake coins can be any multiple of $p$ from $0$ to $pk=t$. This trivial case corresponds to the sorting sequence $S_0$. In the rest of the paper we assume that $r>1$.

To analyze how both~\eqref{partition-equation} and the requirement $f_r \leq k$ determine a coin configuration consistent with the sorting strategy and a given sorting sequence, we distinguish the effects of both conditions.

\begin{definition}
A configuration of fake coins $(f_1, \dotsc, f_r,f)$ is a \emph{general solution} for a sorting sequence if it satisfies~\eqref{partition-equation}. A general solution is said to satisfy the \emph{height bound} for $k$ if $f_r \leq k$, so that no pile has more than $k$ fake coins.
\end{definition}

A general solution will correspond to an actual coin configuration if it also satisfies the height bound. When studying the coin configurations consistent with the sorting strategy, we will first find general solutions, and then try to find those that satisfy the height bound for $k$.

\begin{example}
Consider the sorting sequence $(0,0,1,2)$ from Example~\ref{ex:sorting-sequence}. Here, $r=3$ and $(p_1, p_2, p_3)=(2,1,1)$. A general solution is $(f_1, f_2, f_3)=(2,3,4)$. This solution does not respect the height bound for $k=2$ since there are $4$ fake coins in the last pile, but clearly does so for $k \geq 4$. In those cases, $11 = 2+2+3+4$ fake coins are possible.

Note that when $k=1$, there are no solutions as the lightest pile has to have at least $2$ fake coins. For $k=2$, the only solution is $(0,1,2)$, so that the sorting strategy proves that there are $f=0+0+1+2=3$ fake coins. However, we know that all of the last pile is composed of fake coins and all of the first two piles are composed of the real coins, so in this case the sorting strategy is indiscreet.
\end{example}

For a certain $k$, the sorting sequence may prove that certain values for $f$ are possible, and in some cases, may do so discreetly. In Table~\ref{table:sorting-sequence}, we list all the possible values for $f$ for all sorting sequences when $p=5, k=5$. The values which the sorting strategy can prove discreetly are in bold.

\begin{table}[ht] 
\[
\begin{array}{ccc}
 \text{Binary rep} & \text{Sorting sequence} &\text{Values of $f$} \\ \hline
  0000 & 0,0,0,0,0 & 0,\mathbf{5},\mathbf{10},\mathbf{15},\mathbf{20},25 \\
 0001 & 0,0,0,0,1 & 1,2,3,4,5,\mathbf{6},\mathbf{7},\mathbf{8},9,\mathbf{11},\mathbf{12},13,\mathbf{16},17,21 \\
 0010 & 0,0,0,1,1 & 2,4,6,\mathbf{7},8,\mathbf{9},10,\mathbf{11},\mathbf{12},13,\mathbf{14},16,\mathbf{17},19,22 \\
 0011 & 0,0,0,1,2 & 3,4,5,6,7,\mathbf{8},\mathbf{9},\mathbf{10},11,12,\mathbf{13},14,15,18 \\
 0100 & 0,0,1,1,1 & 3,6,\mathbf{8},9,\mathbf{11},12,\mathbf{13},\mathbf{14},15,\mathbf{16},17,\mathbf{18},19,21,23 \\
 0101 & 0,0,1,1,2 & 4,5,6,7,8,\mathbf{9},\mathbf{10},11,\mathbf{12},13,\mathbf{14},15,17,19 \\
 0110 & 0,0,1,2,2 & 5,7,8,9,\mathbf{10},11,\mathbf{12},\mathbf{13},14,\mathbf{15},16,17,18,20 \\
 0111 & 0,0,1,2,3 & 6,7,8,9,10,\mathbf{11},12,13,14,16 \\
 1000 & 0,1,1,1,1 & 4,8,\mathbf{9},12,\mathbf{13},\mathbf{14},16,\mathbf{17},\mathbf{18},\mathbf{19},20,21,22,23,24 \\
 1001 & 0,1,1,1,2 & 5,6,7,8,9,\mathbf{10},\mathbf{11},12,13,\mathbf{14},\mathbf{15},16,17,18,19,20 \\
 1010 & 0,1,1,2,2 & 6,8,10,\mathbf{11},12,\mathbf{13},14,\mathbf{15},\mathbf{16},17,18,19,20,21 \\
 1011 & 0,1,1,2,3 & 7,8,9,10,11,\mathbf{12},13,14,15,16,17 \\
 1100 & 0,1,2,2,2 & 7,10,11,\mathbf{12},13,14,\mathbf{15},\mathbf{16},\mathbf{17},18,19,20,21,22 \\
 1101 & 0,1,2,2,3 & 8,9,10,11,12,\mathbf{13},14,15,16,17,18 \\
 1110 & 0,1,2,3,3 & 9,11,12,13,\mathbf{14},15,16,17,18,19 \\
 1111 & 0,1,2,3,4 & 10,11,12,13,14,15 \\
\end{array}
\]\caption{Values of $f$ for sorting sequences with $p=5$, $k=5$.}
\label{table:sorting-sequence}
\end{table}

Due to the opposite nature of real and fake coins, there is a duality between the general solutions that satisfy height bounds and their reversals.

\begin{lemma}\label{thm:sorting-duality}
If the sorting sequence is $S_i$, then we can have a general solution with $f$ fake coins that satisfies the height bound for $k$ if and only if there exists a general solution for the sorting sequence $S_i'$ with $pk-f$ fake coins that satisfies the height bound.
\end{lemma}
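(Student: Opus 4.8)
The plan is to build an explicit involution between the two sets of solutions, coming from the observation that interchanging the roles of real and fake coins reverses the weight order of the piles: a pile with $f_j$ fake coins has $k - f_j$ real coins, so if we treat those real coins as the ``fakes'', the heaviest pile becomes the lightest and each count $f_j$ becomes $k - f_j$. This also explains why the sorting sequence gets replaced by its reversal.

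First I would record the underlying combinatorial fact about reversals. Writing a sorting sequence of length $p$ as the word $0^{p_1} 1^{p_2} \cdots (r-1)^{p_r}$ with $p_1 + \dots + p_r = p$, its binary representation is $0^{p_1-1}\, 1\, 0^{p_2-1}\, 1 \cdots 1\, 0^{p_r-1}$, a string of length $p-1$ with a $1$ exactly at each of the $r-1$ block boundaries. Reversing this string yields the binary representation of the word $0^{p_r} 1^{p_{r-1}} \cdots (r-1)^{p_1}$, so the block-length vector of $S_i'$ is $(p_r, p_{r-1}, \dots, p_1)$.

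Next, given a general solution $(f_1, \dots, f_r, f)$ for $S_i$ satisfying the height bound for $k$, set $g_j = k - f_{r+1-j}$ for $j = 1, \dots, r$. I would verify three things: (i) $0 \le g_1 < \dots < g_r \le k$, where the strict inequalities follow from $f_1 < \dots < f_r$, the inequality $g_1 = k - f_r \ge 0$ is precisely the height bound $f_r \le k$ on the original solution, and $g_r = k - f_1 \le k$ is precisely $f_1 \ge 0$; (ii) using the block-length vector $(p_r, \dots, p_1)$ of $S_i'$ from the previous step,
\[
\sum_{j=1}^r p_{r+1-j}\, g_j \;=\; \sum_{j=1}^r p_{r+1-j}\bigl(k - f_{r+1-j}\bigr) \;=\; k\sum_{j=1}^r p_j - \sum_{j=1}^r p_j f_j \;=\; pk - f,
\]
so $(g_1, \dots, g_r, pk - f)$ satisfies~\eqref{partition-equation} for $S_i'$; and (iii) it satisfies the height bound because $g_r \le k$. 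Hence the map carries general solutions for $S_i$ with $f$ fake coins satisfying the height bound to general solutions for $S_i'$ with $pk - f$ fake coins satisfying the height bound.

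Finally, since $(S_i')' = S_i$ and the map $(f_j)_j \mapsto (k - f_{r+1-j})_j$ is an involution, the same construction applied to $S_i'$ gives the reverse implication — and in fact a bijection between the two families — which proves the equivalence. I do not expect a genuine obstacle here: the only point that needs care is the bookkeeping in step (i), where the ``floor'' condition $f_1 \ge 0$ built into the definition of a general solution and the ``ceiling'' condition $f_r \le k$ that is the height bound swap roles under the involution, so one must track which inequality is responsible for which bound on the $g_j$.
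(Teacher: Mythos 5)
Your proposal is correct and is essentially the paper's own argument — exchanging the roles of real and fake coins, which sends $f_j \mapsto k - f_{r+1-j}$ and reverses the sorting sequence — only spelled out with explicit bookkeeping that the paper leaves implicit (and partially records later as $f_i' = k - f_{r-i+1}$ in Section~\ref{sec:duality-frobenius}). No further comment is needed.
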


\begin{proof}
A general solution for a sorting sequence $S_i$ satisfying the height bound for $k$ has $f$ fake and $pk-f$ real coins. By exchanging the real and fake coins, it is naturally dual to a general solution for $S_i'$ satisfying the height bound with $pk-f$ fake coins and $f$ real coins.
\end{proof}

\begin{corollary}\label{thm:discreetness-duality}
If the sorting sequence is $S_i$, then the sorting strategy can discreetly prove that there are $f$ fake coins if and only if the sorting strategy can discreetly prove that there can be $pk-f$ fake coins for $S_i'$.
\end{corollary}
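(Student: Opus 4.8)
The plan is to piggyback on the bijective duality of Lemma~\ref{thm:sorting-duality} and upgrade it from the level of coin configurations to the level of discreetness. The key observation is that discreetness is, by definition, a statement about the \emph{set} of coin configurations consistent with an outcome of the strategy: the strategy discreetly proves ``there are $f$ fake coins'' precisely when, among all coin configurations consistent with the sorting sequence $S_i$ and having exactly $f$ fake coins, no coin is fake in all of them and no coin is real in all of them. So the first step is to unwind this definition explicitly in the sorting-strategy setting, and to record that the relevant set of configurations is exactly the set of general solutions for $S_i$ with $f$ fake coins that satisfy the height bound for $k$, together with the (irrelevant for discreetness) choice of which coins within each pile are the fake ones.

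Next I would invoke the real/fake swap from the proof of Lemma~\ref{thm:sorting-duality}. That swap is an involution: taking a configuration consistent with $S_i$ and $f$ fake coins, recoloring every real coin as fake and every fake coin as real, produces a configuration consistent with $S_i'$ and $pk-f$ fake coins, and doing it twice returns the original. Crucially, this involution is a bijection on the level of actual labeled configurations, not just on the level of the tuples $(f_1,\dots,f_r,f)$: a coin $c$ is fake in a configuration $C$ for $S_i$ if and only if $c$ is real in the swapped configuration $C'$ for $S_i'$. Therefore the statement ``coin $c$ is fake in every configuration consistent with $S_i$ and $f$'' is equivalent to ``coin $c$ is real in every configuration consistent with $S_i'$ and $pk-f$,'' and symmetrically with the roles of ``fake'' and ``real'' exchanged.

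The third step is then purely formal: the sorting strategy for $S_i$ fails to discreetly prove ``$f$ fake coins'' exactly when some coin is forced (fake in all consistent configurations, or real in all of them); by the equivalence just established, this happens if and only if some coin is forced among the configurations consistent with $S_i'$ and $pk-f$, i.e.\ if and only if the sorting strategy for $S_i'$ fails to discreetly prove ``$pk-f$ fake coins.'' We also need the nondegeneracy hypothesis that a discreet proof actually proves something, i.e.\ that at least one consistent configuration exists; but Lemma~\ref{thm:sorting-duality} already guarantees that the set of configurations for $S_i$ with $f$ is nonempty if and only if the set for $S_i'$ with $pk-f$ is nonempty, so this matches up as well. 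Taking the contrapositive of the ``fails to be discreet'' equivalence gives exactly the claimed biconditional.

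I do not expect a serious obstacle here: the corollary is essentially a bookkeeping consequence of Lemma~\ref{thm:sorting-duality} once one notes that the duality is realized by a configuration-level involution that swaps the roles of ``fake'' and ``real.'' The one point that deserves care, and which I would state explicitly rather than gloss over, is that ``no coin's identity can be concluded'' is symmetric in the two identities (fake/real), so that the swap does not break discreetness even though it interchanges the two kinds of forced coins; this symmetry is what makes the argument go through cleanly in both directions.
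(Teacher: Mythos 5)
Your proposal is correct and follows essentially the same route as the paper: both apply the real/fake swap from Lemma~\ref{thm:sorting-duality} at the level of labeled configurations and observe that discreetness is preserved because the swap merely exchanges the roles of ``forced fake'' and ``forced real.'' The only cosmetic difference is that you unwind the definition of discreetness directly (no coin forced), whereas the paper phrases it via a witnessing pair of configurations (one with a fake coin in every pile, one with a real coin in every pile); the substance is identical.
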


\begin{proof}
The sorting strategy can discreetly prove that there are $f$ fake coins if and only if there exist two configurations with $f$ coins where in the first, there is a fake coin in every pile, and in the second, there is a real coin in every pile. If this is the case for $S_i$, then by the inversion of fake and real coins in the proof of Lemma~\ref{thm:sorting-duality}, the sorting strategy can discreetly show that there can be $pk-f$ coins for $S_i'$. The converse follows from the application of the forward direction to $S_i'$.
\end{proof}

The duality of Lemma~\ref{thm:sorting-duality} and Corollary~\ref{thm:discreetness-duality} can be seen in Table~\ref{table:sorting-sequence}. If $f$ is in the list (and is in bold) for a sorting sequence $S_i$, then $25-f$ is in the list (and is in bold) for $S_i'$.

We also establish a lemma that provides necessary and sufficient conditions for when the sorting strategy is discreet.

\begin{lemma}\label{thm:sorting-discreetness}
The sorting strategy with sorting sequence $S_i$ can discreetly show that the number of fake coins is $f$ with $k$ coins in each pile if and only if there exist solutions of $f-p$ and $f$ fake coins with $k-1$ coins in each pile.
\end{lemma}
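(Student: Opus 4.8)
The plan is to first reduce discreetness to two existence statements about general solutions satisfying the height bound for $k$, and then to transform each of these into a statement about solutions satisfying the height bound for $k-1$ by shifting the number of fake coins in every pile up or down by one.

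First I would record the basic dictionary. A coin configuration consistent with $S_i$ is determined, as far as the sorting strategy can tell, by the numbers $f_1 < \cdots < f_r$ of fake coins in the piles of each weight class, and within any one pile an arbitrary subset of the prescribed size may consist of the fake coins, since the strategy only weighs whole piles against each other. Thus the configurations with exactly $f$ fake coins that are consistent with $S_i$ and with $k$ coins per pile correspond precisely to the general solutions $(f_1,\dots,f_r,f)$ of \eqref{partition-equation} satisfying the height bound for $k$, together with an arbitrary choice of which coins in each pile are fake. Consequently, a given coin lying in a pile of value $j$ can be concluded to be fake exactly when $f_j = k$ in every such general solution, and can be concluded to be real exactly when $f_j = 0$ in every such solution; since $0 \le f_1 < \cdots < f_r \le k$, the former can occur only for $j = r$ and the latter only for $j = 1$.

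Next I would show that the strategy discreetly proves $f$ with $k$ coins per pile if and only if (i) some general solution with $f$ fake coins satisfying the height bound for $k$ has $f_r < k$, and (ii) some general solution with $f$ fake coins satisfying the height bound for $k$ has $f_1 > 0$. The step I expect to require the most care — really the only subtle point — is a quantifier exchange: no coin is forced fake iff for \emph{every} $j$ there is a valid solution with $f_j < k$, and because $f_j \le f_r$ within a single solution this is equivalent to the existence of \emph{one} valid solution with $f_r < k$; symmetrically, no coin is forced real iff there is one valid solution with $f_1 > 0$, using $f_1 \le f_j$. The converse directions are immediate (take $j = r$, respectively $j = 1$). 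One also notes that (i) already guarantees $f$ is an achievable number of fake coins, so no separate non-triviality hypothesis is needed.

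Finally I would translate (i) and (ii) into the stated form. Condition (i) says exactly that there is a general solution with $f$ fake coins satisfying the height bound for $k-1$, since $f_r < k$ is the same as $f_r \le k-1$. For condition (ii), the map $(f_1,\dots,f_r) \mapsto (f_1-1,\dots,f_r-1)$ is a bijection from the general solutions with $f$ fake coins, height bound for $k$, and $f_1 > 0$, onto the general solutions with $f-p$ fake coins and height bound for $k-1$: the strict inequalities are preserved, $f_r - 1 \le k-1$, the total drops by $p_1 + \cdots + p_r = p$, and the inverse map adds $1$ to each coordinate. (Alternatively, (ii) could be deduced from (i) applied to the reversed sequence $S_i'$ via Lemma~\ref{thm:sorting-duality} and Corollary~\ref{thm:discreetness-duality}.) Combining (i) and (ii) with these two reformulations gives the claim: the sorting strategy with sorting sequence $S_i$ discreetly shows that the number of fake coins is $f$ with $k$ coins in each pile if and only if there exist solutions with $f-p$ and with $f$ fake coins, each with $k-1$ coins in each pile.
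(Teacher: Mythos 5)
Your proposal is correct and follows essentially the same route as the paper: discreetness is reduced to the existence of one consistent configuration with a fake coin in every pile ($f_1>0$) and one with a real coin in every pile ($f_r<k$), and each is converted to a configuration with $k-1$ coins per pile by deleting one coin from each pile. The only difference is that you spell out the quantifier exchange (using $f_1\le f_j\le f_r$) justifying that first reduction, which the paper's proof asserts without comment.
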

\begin{proof} 
The sorting strategy can discreetly show that the number of fake coins is $f$ with $k$ coins in each pile if and only if there is a configuration for $f$ fake coins in which there is a fake coin in every pile, and a configuration in which there is a real coin in every pile. In the first case, we remove a fake coin from every pile to obtain a configuration of $f-p$ fake coins with $k-1$ coins in each pile, and in the second case, we remove a real coin from every pile to obtain a configuration of $f$ coins with $k-1$ coins in each pile. Because the reduction can be reversed, the converse holds as well.
\end{proof}

\begin{example}
Consider the sorting sequence $(0, 1, 1, 2)$ with $k=5$. The sorting strategy can discreetly show that the number of fake coin is $10$. According to Lemma~\ref{thm:sorting-discreetness}, there should be configurations with $10$ and $6$ fake coins when there are $p(k-1)=16$ coins total. Figure~\ref{fig:discreetness} illustrates the reduction in the proof of Lemma~\ref{thm:sorting-discreetness}.

The sorting strategy can discreetly show that the number of fake coins is $10$ because there exist a configurations with $10$ fake coin where there is a fake coin in every pile, and one with a real coin in every pile. These two configurations are present on the left. Each pile is represented by a column of coins, and vertical lines separate piles with different weights. The fake coins are colored gray and the real coins white. By removing a row of fake or real coins, we obtain the configurations on the right.

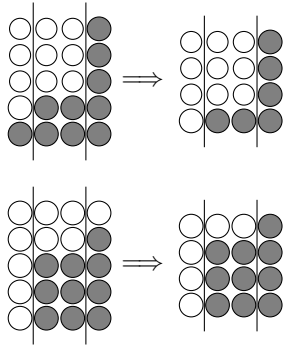
\begin{figure}[ht]
\centering
\begin{tikzpicture}[scale=0.35]
    \foreach \x in {1,...,4}
	    {\filldraw[fill=gray] (\x, 1) circle (0.45cm);}
    \foreach \x in {2,...,4}
        {\filldraw[fill=gray] (\x, 2) circle (0.45cm);}
    \foreach \y in {3,...,5}
        {\filldraw[fill=gray] (4, \y) circle (0.45cm);}
    
    \filldraw[fill=white] (1, 2) circle (0.45cm);
    \foreach \x in {1,...,3}
    {
        \filldraw[fill=white] (\x, 3) circle (0.4cm);
        \filldraw[fill=white] (\x, 4) circle (0.4cm);
        \filldraw[fill=white] (\x, 5) circle (0.4cm);
    }
	        
	\draw (1.5, 0) -- (1.5, 6);
	\draw (3.5, 0) -- (3.5, 6);
	\node[text width=1cm] at (6,3) {$\implies$};
	
	\foreach \x in {2,...,4}
	{
	    \filldraw[fill=gray] (\x, -6) circle (0.45cm);
	    \filldraw[fill=gray] (\x, -5) circle (0.45cm);
	    \filldraw[fill=gray] (\x, -4) circle (0.45cm);
	}
	\filldraw[fill=gray] (4, -3) circle (0.45cm);
    
    \foreach \y in {-6,...,-3}
	    {\filldraw[fill=white] (1, \y) circle (0.45cm);}
	\foreach \x in {2,3}
	    {\filldraw[fill=white] (\x, -3) circle (0.45cm);}
	\foreach \x in {1,...,4}
	    {\filldraw[fill=white] (\x, -2) circle (0.45cm);}
	        
	\draw (1.5, -1) -- (1.5, -7);
	\draw (3.5, -1) -- (3.5, -7);
	\node[text width=1cm] at (6,-4) {$\implies$};
	
    \foreach \x in {2,...,4}
        {\filldraw[fill=gray] (6.5+\x, 2-0.5) circle (0.45cm);}
    \foreach \y in {3,...,5}
        {\filldraw[fill=gray] (10.5, \y-0.5) circle (0.45cm);}
    
    \filldraw[fill=white] (7.5, 2-0.5) circle (0.45cm);
    \foreach \x in {1,...,3}
    {
        \filldraw[fill=white] (6.5+\x, 3-0.5) circle (0.4cm);
        \filldraw[fill=white] (6.5+\x, 4-0.5) circle (0.4cm);
        \filldraw[fill=white] (6.5+\x, 5-0.5) circle (0.4cm);
    }
	        
	\draw (8, 0.5) -- (8, 5.5);
	\draw (10, 0.5) -- (10, 5.5);
	
	\foreach \x in {2,...,4}
	{
	    \filldraw[fill=gray] (6.5+\x, -6+0.5) circle (0.45cm);
	    \filldraw[fill=gray] (6.5+\x, -5+0.5) circle (0.45cm);
	    \filldraw[fill=gray] (6.5+\x, -4+0.5) circle (0.45cm);
	}
	\filldraw[fill=gray] (10.5, -3+0.5) circle (0.45cm);
    
    \foreach \y in {-6,...,-3}
	    {\filldraw[fill=white] (7.5, \y+0.5) circle (0.45cm);}
	\foreach \x in {2,3}
	    {\filldraw[fill=white] (6.5+\x, -3+0.5) circle (0.45cm);}
	        
	\draw (8, -1.5) -- (8, -6.5);
	\draw (10, -1.5) -- (10, -6.5);
	
\end{tikzpicture}
\caption{An illustration of the reduction process in Lemma~\ref{thm:sorting-discreetness}.}
\label{fig:discreetness}
\end{figure}
\end{example}

\begin{corollary}
If the sorting strategy with sorting sequence $S_i$ can discreetly show that the number of fake coins is $f$ with $k$ coins in each pile, then there are solutions with $f-p$ and $f+p$ fake coins with $k$ coins in each pile.
\end{corollary}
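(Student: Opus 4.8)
The plan is to feed the conclusion of Lemma~\ref{thm:sorting-discreetness} into a simple ``add a uniform row of coins'' operation. By Lemma~\ref{thm:sorting-discreetness}, the hypothesis that the sorting strategy with sorting sequence $S_i$ can discreetly prove that the number of fake coins is $f$ with $k$ coins per pile is equivalent to the existence of two coin configurations consistent with $S_i$, one with $f-p$ fake coins and one with $f$ fake coins, each having $k-1$ coins in every pile. I would take these two configurations as the starting point.

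First I would handle $f-p$: starting from the configuration consistent with $S_i$ that has $f-p$ fake coins and $k-1$ coins in each pile, add one real coin to every pile. This yields a configuration with $k$ coins in each pile and still $f-p$ fake coins. The number of fake coins in each individual pile is unchanged, so the relative weight order of the piles — hence the sorting sequence — is still $S_i$, and since the per-pile fake count was at most $k-1 \le k$, the height bound for $k$ is satisfied. Thus $f-p$ is a genuine number of fake coins realizable for $S_i$ with $k$ coins per pile.

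Next I would handle $f+p$ symmetrically: starting from the configuration consistent with $S_i$ with $f$ fake coins and $k-1$ coins in each pile, add one fake coin to every pile. This produces a configuration with $k$ coins in each pile and $f+p$ fake coins; adding the same number of fake coins to every pile preserves the relative order of pile weights, so the sorting sequence remains $S_i$, and the per-pile fake count rises from at most $k-1$ to at most $k$, so the height bound for $k$ still holds. Hence $f+p$ is also realizable for $S_i$ with $k$ coins per pile, completing the argument.

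The only point requiring care — and the nearest thing to an obstacle — is checking that adding a uniform coin (all real, or all fake) to every pile does not disturb the sorting sequence or violate the height bound; both verifications are immediate from the fact that each pile's fake count changes by the same amount (0 or 1) and stays within $[0,k]$. No appeal to the Frobenius machinery of later sections is needed.
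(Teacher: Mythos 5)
Your proof is correct and follows essentially the same route as the paper: invoke Lemma~\ref{thm:sorting-discreetness} to get the two configurations with $k-1$ coins per pile, then add a uniform row of real (resp.\ fake) coins to recover the $f-p$ (resp.\ $f+p$) solutions with $k$ coins per pile. In fact your pairing of which configuration receives which kind of coin is the one that actually yields $f-p$ and $f+p$ as claimed, whereas the paper's own wording momentarily produces $f$ and $f+p$; your write-up is the cleaner version of the intended argument.
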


\begin{proof}
If the sorting strategy can discreetly show that the number of fake coins is $f$, then by Lemma~\ref{thm:sorting-discreetness}, there exist configurations with $f-p$ and $f$ fake coins with $k-1$ coins in each pile. By respectively adding a fake coin or real coin to each pile of the two configurations, we obtain solutions with $f$ and $f+p$ fake coins with $k$ coins in each pile.
\end{proof}

Notice than in Table~\ref{table:sorting-sequence}, every bold number $f$ has $f-5$ and $f+5$ in the same row. The converse, however, is not true. In Table~\ref{table:sorting-sequence}, for the sorting sequence $(0,1,1,1,2)$ values $12$ and $13$ are not discreet, while $12-5$ and $12 + 5$ as well as $13-5$ and $13+5$ are present in the same row.

Now we are ready to calculate the smallest and largest number of possible fake coins for a sorting sequence $S_i$. We denote by $F_{\min}$ the minimum value of $f$ over all general solutions $(f_1, \dotsc, f_r, f)$ of~\eqref{partition-equation} for $S_i$. If there are enough total coins, $F_{\min}$ will be the minimum possible number of fake coins. 
The corresponding value for $S_i'$ is denoted $F_{\min}'$.

\begin{theorem}\label{thm:sorting-minmax}
For the sorting sequence $S_i$, $F_{\min} = \sum_{i=1}^r (i-1) p_i = \sum_{i=2}^r \sum_{m=i}^r p_m$. If $k < r-1$ then there are no solutions, and if $k \geq r-1$, the minimum possible number of fake coins is $F_{\min}$ and the maximum number is $pk-F_{\min}'$.
\end{theorem}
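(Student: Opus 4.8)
The plan is to dispatch the three claims in sequence, since each is short. For the closed form of $F_{\min}$: a general solution must satisfy $0 \le f_1 < f_2 < \cdots < f_r$ with the $f_i$ integers, so $f_i \ge i-1$ for every $i$, with equality attained simultaneously by the solution $f_i = i-1$, whose total is $\sum_{i=1}^r (i-1) p_i$. Because every $p_i$ is a positive integer (the sorting sequence, increasing by at most $1$ per step and ending at $r-1$, takes each value $0,\dots,r-1$ at least once), the objective $f = \sum_i p_i f_i$ of~\eqref{partition-equation} is strictly increasing in each $f_i$, so this solution is optimal among all general solutions and $F_{\min} = \sum_{i=1}^r (i-1) p_i$. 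The second expression $\sum_{i=2}^r \sum_{m=i}^r p_m$ follows by writing $i-1 = \#\{m : 2 \le m \le i\}$ and interchanging the order of summation; I would record this as a one-line computation.

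For the non-existence statement, I note that any general solution has $f_r \ge r-1$, being the top of a strictly increasing chain of $r$ nonnegative integers; hence a general solution satisfying the height bound for $k$ requires $k \ge f_r \ge r-1$. So if $k < r-1$ there is no configuration of fake coins consistent with the sorting strategy.

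For the minimum and maximum when $k \ge r-1$: the extremal general solution $f_i = i-1$ already satisfies $f_r = r-1 \le k$, hence respects the height bound and corresponds to an actual coin configuration; therefore the least number of fake coins realized by a configuration equals $F_{\min}$. For the maximum I would invoke Lemma~\ref{thm:sorting-duality}: configurations for $S_i$ with $f$ fake coins that respect the height bound for $k$ are in bijection with configurations for $S_i'$ with $pk - f$ fake coins that respect the height bound. Since reversing the binary representation preserves the number of $1$'s, it preserves the largest entry of the sorting sequence and hence the value of $r$; so $S_i'$ also admits configurations when $k \ge r-1$, and by the first part its minimum fake-coin count is $F_{\min}'$. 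Consequently the maximum for $S_i$ is $pk - F_{\min}'$. The whole argument is essentially bookkeeping; the one point deserving explicit care — and the closest thing to an obstacle — is checking that $S_i'$ shares the parameter $r$ with $S_i$, which is exactly what makes $F_{\min}'$ meaningful and lets the duality be applied at the same bound $k$.
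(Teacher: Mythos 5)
Your proof is correct and takes essentially the same route as the paper's: minimize $f$ by setting $f_i = i-1$, note that the height bound forces $f_r \geq r-1$ so $k \geq r-1$ is necessary and sufficient for this extremal solution to be realizable, and obtain the maximum via the duality of Lemma~\ref{thm:sorting-duality}. Your explicit verification that $S_i'$ has the same parameter $r$ (reversal preserves the number of $1$'s in the binary representation) is a detail the paper leaves implicit, but the argument is the same.
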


\begin{proof}
The value of $f$ in~\eqref{partition-equation} is minimized when each $f_i$ is as small as possible, which is when $f_i = i-1$. In this case, $f= \sum_{i=1}^r (i-1) p_i = \sum_{i=2}^r \sum_{m=i}^r p_m$.

If $k < r-1$, then no solutions satisfy the height bound, and $S_i$ cannot describe the weight relations for an outcome of the sorting strategy.

When $k\geq r-1$, then the solution with $f_i=i-1$ and $f=F_{\min}$ is realizable. Consequently, by Lemma~\ref{thm:sorting-duality}, there exist solutions satisfying the height bound for $S_i'$. Moreover, by Lemma~\ref{thm:sorting-duality}, a solution with maximum number of fake coins for $S_i$ is dual to a solution with the minimum achievable number of fake coins for $S_i'$. Consequently, the maximum is $pk-F_{\min}'$.
\end{proof}

Due to the duality between $S_i$ and $S_i'$, we have the following technical lemma:

\begin{lemma}
The following equality is true:
\[F_{\min} + F'_{\min} = p(r-1).\]
\end{lemma}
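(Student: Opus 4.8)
The plan is to reduce everything to the binary representations $B_i$ and $B_i'$ and then exploit a simple complementary pairing between the entries of $S_i$ and those of $S_i'$. First I would rewrite $F_{\min}$ in a more convenient form. Write $a_1, \dots, a_p$ for the successive entries of $S_i$ (so $a_1 = 0$). The minimal general solution of~\eqref{partition-equation} assigns $j-1$ fake coins to the piles whose sorting value is $j-1$; grouping the entries of $S_i$ by value therefore gives $F_{\min} = \sum_{i=1}^r (i-1)p_i = \sum_{\ell=1}^p a_\ell$, i.e. $F_{\min}$ is simply the sum of the entries of the sorting sequence. The same reasoning applied to $S_i'$ gives $F'_{\min} = \sum_{\ell=1}^p a'_\ell$, where $a'_1, \dots, a'_p$ are the entries of $S_i'$. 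Before this I should record that $S_i$ and $S_i'$ have the same number $r$ of distinct values: the binary representation of each is a string of length $p-1$, and by definition the number of ones in it is the number of ``increase'' steps, which is $r-1$; reversing a string preserves its number of ones, so the count $r$ is the same for both.

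The key step will be the identity
\[
a_\ell + a'_{p+1-\ell} = r-1 \qquad (1 \le \ell \le p).
\]
To prove it, let $b_1, \dots, b_{p-1}$ be the digits of $B_i$, so $b_j = a_{j+1}-a_j \in \{0,1\}$ and $a_\ell = b_1 + \dots + b_{\ell-1}$ is the number of ones among the first $\ell-1$ digits. By definition the digits of $B_i'$ are $b_j' = b_{p-j}$, hence $a'_{p+1-\ell} = b_1' + \dots + b_{p-\ell}' = b_{p-1} + b_{p-2} + \dots + b_\ell$, the number of ones among the digits $b_\ell, \dots, b_{p-1}$. Adding, each of the $p-1$ digits $b_1, \dots, b_{p-1}$ is counted exactly once, so $a_\ell + a'_{p+1-\ell} = \sum_{j=1}^{p-1} b_j = r-1$. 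Summing over $\ell = 1, \dots, p$ and using that $\ell \mapsto p+1-\ell$ permutes $\{1, \dots, p\}$, so $\sum_\ell a'_{p+1-\ell} = \sum_\ell a'_\ell = F'_{\min}$, yields $F_{\min} + F'_{\min} = p(r-1)$.

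There is essentially no hard step here; the only thing requiring care is the bookkeeping with the index reversal and the convention by which $p_i$ counts the piles carrying the $i$-th smallest sorting value. As a sanity check, one can read off from Table~\ref{table:sorting-sequence} that, e.g., $(0,0,1,2,3)$ and $(0,1,2,3,3)$ (reverses of one another, with $r=4$, $p=5$) have minimal values $6$ and $9$, and $6+9 = 15 = 5\cdot 3 = p(r-1)$. If one prefers to avoid the pairing identity, an equivalent route is to compute $F_{\min} = \sum_{j=1}^{p-1}(p-j)\,b_j$ and $F'_{\min} = \sum_{j=1}^{p-1} j\,b_j$ by switching the order of summation and then add, but the complementary pairing above is cleaner and makes the role of the duality of Lemma~\ref{thm:sorting-duality} transparent.
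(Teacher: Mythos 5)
Your proof is correct and rests on the same underlying idea as the paper's: the duality between $S_i$ and $S_i'$ reverses the multiplicities, so the two minima pair up to $r-1$ per pile. The paper does this in one line at the level of the multiplicities ($F'_{\min}=\sum_{i=1}^r(i-1)p_{r-i+1}=\sum_{i=1}^r(r-i)p_i$), whereas you work entry by entry via the binary digits and prove the complementarity $a_\ell+a'_{p+1-\ell}=r-1$ explicitly --- a slightly longer route that has the minor virtue of justifying the reversal relation $p_j'=p_{r-j+1}$, which the paper uses without comment.
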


\begin{proof}
By Theorem~\ref{thm:sorting-minmax}, $F_{\min} = \sum_{i=1}^r (i-1) p_i$ and $F_{\min}' = \sum_{i=1}^r (i-1) p_{r-i+1} = \sum_{i=1}^r (r-1)p_i$, and consequently, $F_{\min} + F_{\min}' = p(r-1)$.
\end{proof}

We can provide a necessary condition on $k$ for discreetness and calculate minimum and maximum values of $f$ for which the strategy is discreet.

\begin{theorem}\label{thm:sorting-minmax-discreet}
Given a sorting sequence $S_i$, if $k < r+1$, then the sorting strategy is not discreet for any values, and for $k \geq r+1$, the minimum number of fake coins for which it is discreet is $p+F_{\min}$ and the maximum number is $p(k-1)-F_{\min}'$.
\end{theorem}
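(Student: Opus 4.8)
The plan is to reduce the whole statement to Lemma~\ref{thm:sorting-discreetness}: for the sequence $S_i$ the sorting strategy discreetly proves there are $f$ fake coins among piles of size $k$ exactly when both $f-p$ and $f$ are totals of general solutions satisfying the height bound for $k-1$. So I want to understand which pairs of values differing by $p$ are simultaneously realizable with piles of size $k-1$; then Theorem~\ref{thm:sorting-minmax} (applied with $k-1$ in place of $k$) pins down the minimum, and Corollary~\ref{thm:discreetness-duality} converts that into the maximum.

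First I would prove the claim for $k < r+1$ via the structural observation that any general solution whose total exceeds $F_{\min}$ must have $f_r \ge r$. The constraints $0 \le f_1 < \cdots < f_r$ force $f_i \ge i-1$ for all $i$ by induction, so the total equals $F_{\min}$ exactly when $f_i = i-1$ for every $i$; if the total is strictly larger, then $f_j \ge j$ for some $j$, and $f_j < f_{j+1} < \cdots < f_r$ then forces $f_r \ge r$. Now if the strategy is discreet for some $f$, Lemma~\ref{thm:sorting-discreetness} yields two general solutions obeying the height bound for $k-1$ with totals $f-p$ and $f$; these totals differ, so the larger exceeds $F_{\min}$ and hence needs $f_r \ge r$, i.e.\ $k-1 \ge r$. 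Thus $k < r+1$ forces indiscreetness for every value. I expect this observation --- the reason the threshold is $r+1$ and not $r$ --- to be the only real content of the theorem; the rest is bookkeeping.

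Next, assume $k \ge r+1$. For the lower bound on the discreet values: if the strategy is discreet for $f$, then $f-p$ is the total of a general solution satisfying the height bound for $k-1$, so $f-p \ge F_{\min}$ by Theorem~\ref{thm:sorting-minmax} (valid since $k-1 \ge r > r-1$), whence $f \ge p + F_{\min}$. For attainability I would simply exhibit the two configurations $(f_1,\dotsc,f_r) = (0,1,\dotsc,r-1)$ and $(f_1,\dotsc,f_r)=(1,2,\dotsc,r)$: both satisfy $f_r \le r \le k-1$, and their totals are $F_{\min}$ and $F_{\min}+p$, so Lemma~\ref{thm:sorting-discreetness} makes $f = p+F_{\min}$ discreet.

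Finally I would obtain the maximum from duality. Since reversing a binary representation preserves the number of ones, $S_i'$ has the same parameter $r$, so the previous paragraph applied to $S_i'$ gives $p + F_{\min}'$ as its minimum discreet value. By Corollary~\ref{thm:discreetness-duality}, the strategy for $S_i$ discreetly proves $f$ fake coins (with piles of size $k$) if and only if the strategy for $S_i'$ discreetly proves $pk-f$ fake coins, so the maximum discreet value for $S_i$ is $pk - (p + F_{\min}') = p(k-1) - F_{\min}'$.
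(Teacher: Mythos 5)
Your proof is correct and follows essentially the same route as the paper's: the characterization of discreetness via a configuration with a fake coin in every pile and one with a real coin in every pile (packaged as Lemma~\ref{thm:sorting-discreetness}), combined with the minimality argument of Theorem~\ref{thm:sorting-minmax} and the duality of Corollary~\ref{thm:discreetness-duality}. If anything, your write-up is more complete than the paper's, which only sketches the necessity of $k \geq r+1$ and the bound $f \geq p + F_{\min}$, leaving the attainability of $p+F_{\min}$ and the entire claim about the maximum $p(k-1)-F_{\min}'$ implicit.
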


\begin{proof}
Given the sorting sequence $S_i$, let $f$ be the minimal value for which the sorting strategy is discreet that satisfies the height bound for $k$. There must exist a configuration for $f$ with a fake coin in every pile and a configuration with a real coin in every pile. The first configuration implies that $f \geq p + F_{\min}$. Observe that every configuration with $p + F_{\min}$ fake coins must necessarily have at least $r$ coins in the pile marked $r$. The second configuration requires that at least one of these configurations also have a real coin in every pile, including the last one, and thus $k \geq r+1$.
\end{proof}

Although these arguments suffice to calculate the minimum and maximum numbers of fake coins, in the following section we  analyze~\eqref{partition-equation} and the requirement that $f_r \leq k$.

\section{Duality and the Frobenius problem}\label{sec:duality-frobenius}

We analyze the solutions to~\eqref{partition-equation} and the requirement that $f_r \leq k$: By a change of variable, we can re-express~\eqref{partition-equation} as another Diophantine system of equations. If we let $x_r = f_1$ and $x_i = f_{r-i+1} - f_{r-i}-1$ for $i = 1, \dotsc, r-1$, then we re-express \eqref{partition-equation} and $f_r \leq k$ as

\begin{align}
&\sum_{i=1}^r\left(\sum_{m=r-i+1}^r p_m \right) x_i = f-\sum_{i=2}^r \sum_{m=i}^r p_m = f - F_{\min}\ \text{ \ with \ } \ x_1, \dotsc, x_r \geq 0, \label{nondecreasing-cond} \\
&\sum_{i=1}^rx_i \leq k-r+1. \label{height-bound}
\end{align}

By denoting $a_i = \sum_{m=r-i+1}^r p_m$ and $n = f-F_{\min}$, we get an equation 
\[a_1x_1+\dotsb+a_rx_r = n,\]
where $a_{i+1} - a_i = p_{r-i} > 0$.

There is a bijection between solutions to~\eqref{partition-equation} and~\eqref{nondecreasing-cond} given by the previous change of variable. We can recover solutions to~\eqref{partition-equation} from the solutions of~\eqref{nondecreasing-cond} by setting $f_i = i-1+ \sum_{j=r-i+1}^r x_j$. A solution $(x_1, \dotsc, x_r)$ to~\eqref{nondecreasing-cond} is equivalent to a \emph{general solution} $(f_1, \dotsc, f_r)$ for the sorting sequence, and one that additionally satisfies~\eqref{height-bound} is equivalent to a solution that satisfies the \emph{height bound} for $k$. 

As proven in Lemma~\ref{thm:sorting-duality}, a general solution $(f_1, \dotsc, f_r)$ satisfying the height bound for $k$ naturally corresponds to a general solution $(f_1', \dotsc f_r')$ for $S_i'$ satisfying the height bound with $pk-f$ fake coins and $f$ real coins through the relation $f_i'=k-f_{r-i+1}$. The equations for the general solution for the sorting sequence $S_i'$ are, if we let $x_i'=f_{r-i+1}'-f_{r-i}'-1$ and $x_r'=f_1'$,

\begin{align}
& \sum_{i=1}^r \left ( \sum_{m=1}^i p_m \right ) x_i' = (pk-f) - \sum_{i=2}^r \sum_{m=1}^{r-i+1} p_m = (pk-f) - F'_{\min}\ \text{ \ with \ } \ x_1', \dotsc, x_r' \geq 0, \label{dual-nondecreasing-cond} \\
&\sum_{i=1}^rx_i' \leq k-r+1. \label{dual-height-bound}
\end{align}

A solution $(x_1', \dotsc, x_r')$ to~\eqref{dual-nondecreasing-cond} corresponds to a general solution for $S_i'$ and one that satisfies~\eqref{dual-height-bound} corresponds to a solution satisfying the height bound for $k$.

The correspondence between solutions $(f_1, \dotsc, f_r)$ that satisfy the height bound and solutions $(f_1', \dotsc, f_r')$ creates the following expressions for  $x_i'$ in terms of $x_i$:
\[
x_i'= f_{r-i+1}' - f_{r-i}'-1 = (k-f_i) - (k-f_{i+1}) - 1 = f_{i+1} - f_i - 1 = x_{r-i},
\]
for $i < r$, and 
\[
x_r' = f_1' = k - f_r = k - \sum_{m=1}^r x_m. 
\]

This duality provides a necessary condition for the existence of a configuration with $f$ fake coins.

\begin{prop}\label{thm:necessary}
If a configuration of $f$ fake coins consistent with the sorting sequence $S_i$ exists, then there exists a general solution for $f$ with sorting sequence $S_i$ and a general solution for $pk-f$ for sorting sequence $S_i'$.
\end{prop}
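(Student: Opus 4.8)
The plan is to observe that the statement unravels directly from the definitions together with the duality already recorded in Lemma~\ref{thm:sorting-duality}, so the proof is essentially bookkeeping. First I would recall that a configuration of $f$ fake coins consistent with the sorting sequence $S_i$ is, by definition, a general solution $(f_1,\dots,f_r,f)$ for $S_i$ satisfying \eqref{partition-equation} that \emph{in addition} obeys the height bound $f_r\le k$. Forgetting the height bound, such a tuple is already a general solution for $f$ with sorting sequence $S_i$; this yields the first assertion immediately, the only point worth stressing being that existence of a configuration is strictly stronger than existence of a general solution, so the implication genuinely runs in this direction.

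For the second assertion I would invoke Lemma~\ref{thm:sorting-duality}: because there is a general solution for $S_i$ with $f$ fake coins satisfying the height bound for $k$, there is a general solution for $S_i'$ with $pk-f$ fake coins that also satisfies the height bound. Dropping the height-bound requirement again, this is in particular a general solution for $pk-f$ with sorting sequence $S_i'$, which is exactly what is claimed. If one prefers an explicit construction rather than a black-box appeal, one can exhibit the dual tuple via $f_i' = k - f_{r-i+1}$ and verify that it satisfies the analogue \eqref{dual-nondecreasing-cond} of \eqref{partition-equation} for $S_i'$ using $p_{r-i+1}$ in place of $p_i$; the arithmetic here is the same computation carried out in the change-of-variable discussion preceding the statement.

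I do not expect a genuine obstacle: the content is entirely definitional, and the purpose of the proposition is to isolate the weaker ``general solution only'' consequence of consistency, which will be the form of the necessary condition used later (before the height bound is dealt with by the redistribution arguments of Section~\ref{sec:redistribution}). The only care needed is to keep the logical direction straight and to phrase the duality step as an application of Lemma~\ref{thm:sorting-duality} rather than re-deriving it.
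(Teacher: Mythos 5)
Your argument is correct and matches the paper's intent exactly: the paper states this proposition without a separate proof, treating it as immediate from the preceding duality discussion and Lemma~\ref{thm:sorting-duality}, which is precisely the two-step unpacking (drop the height bound, then dualize and drop the height bound again) that you give.
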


The converse is not true. It is possible for there to exist general solutions for $f$ with sorting sequence $S_i$ and general solutions for $pk-f$ for sorting sequence $S_i'$ without there being a solution respecting the height bound as well.

\begin{example}
Consider the sorting sequence $(0,1,1,2,3)$ with $k=4$. The general solutions for $f=10$ are: $(f_1, f_2, f_3, f_4)=\{(0,1,2,6), (0,1,3,5)\}$. Note that none satisfy the height bound. For the reverse sorting sequence $(0,1,2,2,3)$, there is only one general solution: $(f_1, f_2, f_3, f_4)=(0,1,2,5)$, which does not satisfy the height bound either.
\end{example}

Though the converse of Proposition~\ref{thm:necessary} is not true, in Section~\ref{sec:redistribution} we show that it becomes true for large enough pile sizes and we estimate the $k$ for which the converse begins to be true.

\subsection{The Frobenius problem}\label{sec:frobenius}
To analyze the existence of solutions to~\eqref{nondecreasing-cond}, we will employ the theory of the \emph{Frobenius coin problem}. The Frobenius coin problem concerns the existence and nature of non-negative solutions $x_1, \dotsc, x_r$ to the Diophantine equation
\begin{equation} \label{frobenius}
a_1x_1+\dotsb+a_rx_r = n
\end{equation} 
given positive integers $a_1<a_2 < \cdots< a_r$ and non-negative $n$. Notice that equation~\eqref{nondecreasing-cond} is of the form of~\eqref{frobenius} with $a_i = \sum_{m=r-i+1}^r p_m$ and $n = f-F_{\min}$.

A fundamental result regarding the Frobenius coin problem is the following:

\begin{lemma}{\cite[Theorem 1.0.1]{RamirezAlfonsin}}\label{thm:existence-frob-number}
If $\gcd(a_1, \dotsc, a_r)=1$, then for all sufficiently large $n$, the equation~\eqref{frobenius} has non-negative integer solutions.
\end{lemma}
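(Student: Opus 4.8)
The claim is the classical statement that the numerical semigroup generated by $a_1 < \dots < a_r$ with $\gcd(a_1,\dots,a_r)=1$ is cofinite, i.e. contains all sufficiently large integers. The plan is to give a self-contained elementary argument (the cited source proves it, but a short proof fits here). First I would reduce to the two-generator case as a stepping stone: if $\gcd(a,b)=1$ then every integer $n \geq (a-1)(b-1)$ is of the form $ax + by$ with $x,y \geq 0$. This is proved by observing that since $\gcd(a,b)=1$, the residues $0, b, 2b, \dots, (a-1)b$ are distinct modulo $a$, hence form a complete residue system mod $a$; so for any $n$ there is a unique $y \in \{0,\dots,a-1\}$ with $by \equiv n \pmod a$, and then $x = (n - by)/a$ is an integer, which is non-negative as soon as $n \geq by \geq b(a-1)$, in particular whenever $n \geq (a-1)(b-1) + (a-1) = a(b-1)$; a slightly sharper bookkeeping gives the standard bound but any finite bound suffices for our purposes.

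Next I would handle the general case by induction on $r$. Set $d = \gcd(a_1, \dots, a_{r-1})$. If $d = 1$, then by the inductive hypothesis all sufficiently large $n$ are already represented using only $a_1, \dots, a_{r-1}$ (taking $x_r = 0$), and we are done. If $d > 1$, then since $\gcd(a_1,\dots,a_r) = \gcd(d, a_r) = 1$, by the two-generator case every sufficiently large multiple of ... more carefully: write $a_i = d\,b_i$ for $i < r$ with $\gcd(b_1,\dots,b_{r-1}) = 1$. By the inductive hypothesis there is $N_0$ such that every integer $m \geq N_0$ is expressible as $b_1 x_1 + \dots + b_{r-1} x_{r-1}$ with $x_i \geq 0$; equivalently every multiple of $d$ that is at least $d N_0$ lies in the semigroup generated by $a_1, \dots, a_{r-1}$. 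Now given a large $n$: since $\gcd(d, a_r) = 1$, by the two-generator case we may write $n = d u + a_r x_r$ with $u, x_r \geq 0$, and moreover — by choosing $x_r$ among a complete residue system mod $d$ — we can force $x_r \leq d - 1$, hence $a_r x_r \leq a_r(d-1)$ is bounded, so $d u = n - a_r x_r$ is large, i.e. $u \geq N_0$ once $n$ exceeds a fixed threshold. Then $du$ is a representable multiple of $d$, so $n = (a_1 x_1 + \dots + a_{r-1} x_{r-1}) + a_r x_r$ as desired.

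The only real subtlety — and the step I would be most careful about — is the "moreover" in the two-generator application: one must not merely know $n$ has some representation $du + a_r x_r$, but needs a representation with $x_r$ bounded independently of $n$, so that $u$ inherits the largeness of $n$. This is exactly the residue-class argument from the two-generator case (pick the unique $x_r \in \{0, 1, \dots, d-1\}$ with $a_r x_r \equiv n \pmod d$, which is solvable precisely because $\gcd(a_r, d) = 1$), so the two-generator lemma is used in the sharp form, not just the cofiniteness form. Everything else is routine. I would close by remarking that the argument in fact produces an explicit (if non-optimal) threshold, which is all that is needed downstream, since later sections only invoke the existence of such a bound and then separately estimate the Frobenius number using known formulas.
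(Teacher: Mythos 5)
Your proposal is correct, but note that the paper does not prove this lemma at all: it is quoted verbatim as Theorem 1.0.1 of Ram\'irez Alfons\'in's monograph, so there is no internal argument to compare against. What you have written is the standard elementary proof of cofiniteness of a numerical semigroup with coprime generators, and it is sound: the two-generator residue-class argument (the multiples $0,b,\dotsc,(a-1)b$ form a complete residue system mod $a$, so one can choose $y\le a-1$ and hence bound $by$) is right, and your induction on $r$ via $d=\gcd(a_1,\dotsc,a_{r-1})$ correctly isolates the one genuine subtlety, namely that you must take the representation $n = du + a_r x_r$ with $x_r \in \{0,\dotsc,d-1\}$ bounded independently of $n$ so that $u$ exceeds the inductive threshold $N_0$. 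The only blemish is a small arithmetic slip in the two-generator bookkeeping: the inequality chain ``$n \ge by \ge b(a-1)$'' has the second inequality reversed (one has $by \le b(a-1)$, which is why $n \ge b(a-1)$ suffices), and $(a-1)(b-1)+(a-1)$ equals $(a-1)b$, not $a(b-1)$; as you yourself note, any finite threshold suffices, so this does not affect correctness. The trade-off between the two routes is clear: the citation buys brevity and access to the sharp Frobenius-number machinery (which the paper exploits separately via the Schur, Erd\H{o}s--Graham, and Selmer bounds in Section~\ref{sec:frobenius}), while your argument buys self-containedness and an explicit, if non-optimal, threshold --- which is indeed all that the downstream results (Lemma~\ref{thm:upward-range} and Theorem~\ref{thm:range}) require.
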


As a result, given integers $a_1, \dotsc, a_r$, there is some largest $n$ for which~\eqref{frobenius} has no solution. This number is called the \emph{Frobenius number} of $a_1, \dotsc, a_r$, and is denoted $g(a_1, \dotsc, a_r)$. Its calculation is the most important aspect of the Frobenius problem. In general, there exist no closed-form expressions for the Frobenius number, although there exist algorithms as well as exact formulae for some specific cases. A variety of upper bounds have been proven.

According to Brauer, Schur proved the following bound in his 1935 lectures~\cite{Brauer}:

\begin{align}\label{schurbound}
g(a_1, \dotsc, a_r) &\leq (a_1-1)(a_r-1)-1.
\end{align}

Other bounds in the spirit of~\eqref{schurbound} have also been found. In 1972, Erd\H{o}s and Graham~\cite{ErdosGraham} obtained~\eqref{erdosgrahambound}, and Selmer~\cite{Selmer} proved~\eqref{selmerbound} in 1977.
\begin{align}
g(a_1, \dotsc, a_r) &\leq 2a_{r-1} \left \lfloor \frac{a_r}{r} \right \rfloor - a_r, \label{erdosgrahambound} \\
g(a_1, \dotsc, a_r) &\leq 2a_r \left \lfloor \frac{a_1}{r} \right \rfloor - a_1 \label{selmerbound} 
\end{align}

In general, the two latter bounds are an improvement over~\eqref{schurbound}. Any of the bounds though, is superior in a few cases, depending on the precise values of $a_1, a_{r-1}, a_r$. These and other bounds can be found in \cite[Chapter~3.1]{RamirezAlfonsin}.

Additional facets of the problem, such as the calculation of the number of solutions to~\eqref{frobenius}, have also been studied. A definitive reference on the Frobenius problem itself as well as its many applications in pure mathematics and computer science is the monograph of Ramir\'{e}z Alfons\'{i}n~\cite{RamirezAlfonsin}.

Our strategy in Section~\ref{sec:arith-progressions} will be to apply both Lemma~\ref{thm:existence-frob-number} about the Frobenius problem as well as the results of Section~\ref{sec:redistribution} to equation~\eqref{nondecreasing-cond} in order to prove the existence of arithmetic progressions for the sorting strategy.

\section{Redistribution and the height bound}\label{sec:redistribution}

The goal of this section is to show that given a general solution for our sorting sequence, there exists a $K$ on the order of $p$ such that for any pile size $k \geq K$, there exists a solution that satisfies the height bound. We will do so via a redistribution procedure for the fake coins.

Note that if we have a sorting sequence, $f_i$ is the number of fake coins in the piles marked $i-1$.
\newline

\noindent \textbf{Redistribution procedure.} Let $\gcd(p_1, \dotsc, p_r) = c$. Suppose $f_{i+1} > f_i + (p_i + p_{i+1})/c$. We remove $p_i/c$ fake coins from each pile in the group marked $i$, and add $p_{i+1}/c$ coins to each pile in the group marked $i-1$.

Notice that $(p_i p_{i+1})/c$ coins that are removed from the  group marked $i$ and exactly $(p_i p_{i+1})/c$ coins are added to the group marked $i-1$.

\begin{lemma}
If the redistribution procedure is applied to a general solution, then the resulting coin configuration will also be a general solution for the same sorting sequence.
\end{lemma}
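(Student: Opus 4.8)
The plan is to verify directly that the redistribution procedure preserves equation~\eqref{partition-equation}, i.e., that the new configuration is still a valid general solution for the same sorting sequence. This breaks into two checks: first, that the total number of fake coins is unchanged; second, that the new values $(f_1', \dotsc, f_r')$ still satisfy the strict inequalities $0 \leq f_1' < \cdots < f_r'$ and remain non-negative integers, so that the sorting sequence is genuinely realized.

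First I would record the effect of the procedure on the $f_j$'s. When the procedure is applied at index $i$ (so that $f_{i+1} > f_i + (p_i+p_{i+1})/c$), the piles marked $i-1$ each gain $p_{i+1}/c$ fake coins and the piles marked $i$ each lose $p_i/c$ fake coins, while all other groups are untouched. Thus $f_i' = f_i + p_{i+1}/c$, $f_{i+1}' = f_{i+1} - p_i/c$, and $f_j' = f_j$ for $j \notin \{i, i+1\}$. Since $c = \gcd(p_1, \dotsc, p_r)$ divides each $p_m$, all of these are non-negative integers (non-negativity of $f_{i+1}'$ follows because $f_{i+1} > f_i + (p_i+p_{i+1})/c \geq p_i/c$). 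For the total, the change in $p_1 f_1 + \dotsb + p_r f_r$ is $p_i \cdot (p_{i+1}/c) - p_{i+1} \cdot (p_i/c) = 0$, exactly the cancellation already noted in the paragraph before the lemma; so $f' = f$.

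Next I would check the ordering $f_1' < \cdots < f_r'$. Only the three potentially affected inequalities need attention: $f_{i-1}' < f_i'$, $f_i' < f_{i+1}'$, and $f_{i+1}' < f_{i+2}'$ (with the first and last vacuous at the boundary). For $f_{i-1}' < f_i'$: $f_{i-1}' = f_{i-1} < f_i < f_i + p_{i+1}/c = f_i'$. For $f_{i+1}' < f_{i+2}'$: $f_{i+1}' = f_{i+1} - p_i/c < f_{i+1} < f_{i+2} = f_{i+2}'$. The crucial one is $f_i' < f_{i+1}'$, which reads $f_i + p_{i+1}/c < f_{i+1} - p_i/c$, i.e., $f_{i+1} > f_i + (p_i+p_{i+1})/c$ — precisely the hypothesis triggering the procedure. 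Hence all inequalities hold, $(f_1', \dotsc, f_r', f)$ satisfies~\eqref{partition-equation}, and it is a general solution for the same sorting sequence.

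There is no real obstacle here; the lemma is essentially bookkeeping, and the only subtlety is making sure the threshold in the procedure's hypothesis is exactly what is needed to keep the strict inequality $f_i' < f_{i+1}'$, which I would emphasize as the heart of the argument. (One should also note the procedure does not change $r$ or the multiplicities $p_j$, so the underlying sorting sequence — determined by the pattern of repeats — is literally the same.)
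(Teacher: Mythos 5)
Your proof is correct and follows the same route as the paper's, which simply asserts that ``by the assumptions on when the redistribution procedure can be carried out, and the fact that no other piles are affected, all the pile relations remain true''; you have merely spelled out the bookkeeping (conservation of the total, integrality via $c \mid p_m$, and the three affected inequalities) that the paper leaves implicit. Your observation that the threshold $f_{i+1} > f_i + (p_i+p_{i+1})/c$ is exactly what is needed to preserve the strict inequality $f_i' < f_{i+1}'$ is precisely the point the paper is gesturing at.
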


\begin{proof}
The number of fake coins in each pile in the group marked $i$ after the redistribution is $f_{i+1}-p_i/c$, and in the group marked $i-1$ is $f_i + p_{i+1}/c$. By the assumptions on when the redistribution procedure can be carried out, and the fact that no other piles are effected, all the pile relations remain true.
\end{proof}

We perform the redistribution procedure as many times as we can find a pair of values $f_i$ and $f_{i+1}$ that allow us to redistribute. The following lemma shows that this process will terminate.

\begin{lemma}
The redistribution of coins can only be carried out a finite number of times.
\end{lemma}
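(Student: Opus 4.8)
The plan is to exhibit a non-negative integer quantity attached to a general solution that strictly decreases with every application of the redistribution procedure; since a strictly decreasing sequence of non-negative integers must be finite, this will bound the number of redistributions. The natural candidate is a weighted sum of the $f_i$'s that captures how "spread out" the configuration is, decreased by how much mass the redistribution moves toward the lighter-indexed piles.

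First I would recall the effect of one redistribution step on the pair $(f_i, f_{i+1})$: the value $f_{i+1}$ (the number of fake coins in each pile of the group marked $i$) drops to $f_{i+1} - p_i/c$, while $f_i$ (the group marked $i-1$) rises to $f_i + p_{i+1}/c$; all other $f_j$ are untouched. So the total number of fake coins $f = \sum_j p_j f_j$ is unchanged, as already noted, but the "second moment" type quantity $\Phi = \sum_{j=1}^r p_j f_j^2$ changes. Computing the change, the step replaces $p_i f_{i+1}^2 + p_{i+1} f_i^2$ by $p_i (f_{i+1} - p_i/c)^2 + p_{i+1}(f_i + p_{i+1}/c)^2$; expanding, the cross terms give a decrease proportional to $(f_{i+1} - f_i)$ minus a bounded positive correction, and the hypothesis $f_{i+1} > f_i + (p_i + p_{i+1})/c$ is exactly what forces the net change in $\Phi$ to be strictly negative. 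One must check that $\Phi$ is always a non-negative integer (it is, since all $f_j$ and $p_j$ are non-negative integers), so it cannot decrease forever.

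The main obstacle — really the only delicate point — is verifying that the algebraic decrease in $\Phi$ is genuinely bounded below by a positive amount, i.e. that the decrement is at least $1$ (or at least some fixed positive rational, which suffices since $\Phi$ takes values in a discrete set $\tfrac{1}{c^2}\mathbb{Z}_{\ge 0}$ or even in $\mathbb{Z}_{\ge 0}$). This is a short computation: writing $\delta = f_{i+1} - f_i$ and $u = p_i/c$, $v = p_{i+1}/c$, the change is $p_i(u^2 - 2u\delta) + p_{i+1}(v^2 + 2v\delta)$, which after substituting $p_i = cu$, $p_{i+1} = cv$ becomes $c\,u v (u + v) - 2cuv\,\delta = cuv\big((u+v) - 2\delta\big)$; the redistribution hypothesis $\delta > u + v$ makes $(u+v) - 2\delta < -(u+v) < 0$, so the change is at most $-cuv(u+v) < 0$, a fixed negative quantity bounded away from $0$. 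Hence each redistribution strictly decreases $\Phi$ by a bounded positive amount, and since $\Phi \ge 0$ always, the procedure terminates after finitely many steps.
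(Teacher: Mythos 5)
Your proof is correct and follows the same overall strategy as the paper: exhibit a non-negative integer monovariant that strictly decreases with each redistribution. The difference is the choice of monovariant. You use the quadratic quantity $\Phi=\sum_j p_j f_j^2$, whose decrease genuinely requires the redistribution hypothesis $f_{i+1}-f_i>(p_i+p_{i+1})/c$; the paper instead uses the linear quantity $M=\sum_j j\,p_j f_j$, which drops by exactly $p_ip_{i+1}/c$ in every step regardless of that hypothesis, making the computation a one-line bookkeeping of where the $p_ip_{i+1}/c$ moved coins land. Your ``energy'' argument is a bit heavier but equally valid, and it has the mild virtue of quantifying the decrease ($\Phi$ drops by more than $cuv(u+v)$ per step). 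One small slip to fix: under the paper's convention the group marked $i$ consists of $p_{i+1}$ piles each holding $f_{i+1}$ fake coins, so the relevant terms of $\Phi$ are $p_{i+1}f_{i+1}^2+p_if_i^2$, not $p_if_{i+1}^2+p_{i+1}f_i^2$ as you wrote; likewise the intermediate expression $p_i(u^2-2u\delta)+p_{i+1}(v^2+2v\delta)$ is not literally correct. Your final formula $cuv\bigl((u+v)-2\delta\bigr)$ is, however, exactly the change of the correctly indexed $\Phi$, so the conclusion stands once those two lines are repaired; note also that $\Phi$ stays in $\mathbb{Z}_{\ge 0}$ because $p_i/c$ and $p_{i+1}/c$ are integers, so each $f_j$ remains an integer throughout.
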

\begin{proof}
Let us define the quantity $M=p_1 f_1 +2p_2 f_2+\dotsb+r p_r f_r$. After a redistribution of coins from each pile marked $i$ to those marked $i-1$ the new quantity becomes:
\[p_1f_1+\dotsb+ip_i(f_i+p_{i+1}/c)+(i+1)p_{i+1}(f_{i+1}-p_i/c)+\dotsb+rp_rf_r = M - (p_ip_{i+1})/c,\] 
so that $M$ always decreases. However, note that $M$ cannot be negative, and thus, only a finite number of cycles of redistribution can be carried out.
\end{proof}

The redistribution allows us to reduce the difference between the number of fake coins in the lightest and heaviest piles.

\begin{prop}\label{thm:mindifferenceinpile}
If a general solution for a given sorting sequence exists, then a general solution such that $f_r - f_1 \leq (2p - p_1 - p_r)/c$ also exists.
\end{prop}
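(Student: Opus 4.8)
The plan is to apply the redistribution procedure repeatedly until it can no longer be carried out, and then read off the bound on $f_r - f_1$ from the termination condition. By the preceding two lemmas, starting from any general solution, each application of the redistribution procedure produces another general solution for the same sorting sequence, and the monovariant $M = p_1 f_1 + 2 p_2 f_2 + \dotsb + r p_r f_r$ strictly decreases while staying non-negative; hence after finitely many steps we reach a general solution $(f_1, \dotsc, f_r)$ to which the redistribution procedure cannot be applied for any index $i$. The failure of the procedure at every $i$ means precisely that $f_{i+1} \leq f_i + (p_i + p_{i+1})/c$ for all $i = 1, \dotsc, r-1$.

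The main work is then a telescoping estimate. Summing the inequalities $f_{i+1} - f_i \leq (p_i + p_{i+1})/c$ over $i = 1, \dotsc, r-1$ gives
\[
f_r - f_1 = \sum_{i=1}^{r-1} (f_{i+1} - f_i) \leq \frac{1}{c}\sum_{i=1}^{r-1} (p_i + p_{i+1}).
\]
In the right-hand sum each of $p_2, \dotsc, p_{r-1}$ appears exactly twice while $p_1$ and $p_r$ appear exactly once, so $\sum_{i=1}^{r-1}(p_i + p_{i+1}) = 2(p_1 + \dotsb + p_r) - p_1 - p_r = 2p - p_1 - p_r$. Therefore the terminal general solution satisfies $f_r - f_1 \leq (2p - p_1 - p_r)/c$, which is exactly the claimed bound.

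The only subtlety I anticipate is bookkeeping rather than a genuine obstacle: one must be careful that $f_{i+1} \leq f_i + (p_i + p_{i+1})/c$ is the correct negation of the procedure's applicability condition $f_{i+1} > f_i + (p_i + p_{i+1})/c$, and that the strict inequalities $0 \leq f_1 < f_2 < \dotsb < f_r$ defining a general solution are never violated along the way — but this is already guaranteed by the lemma stating that the redistribution procedure maps general solutions to general solutions. Note also that we have used only that the process terminates at *some* solution where no redistribution is possible; we do not need to control *which* solution, nor the number of steps. Thus no appeal to the Frobenius machinery is required here, and the argument is self-contained given the two redistribution lemmas.
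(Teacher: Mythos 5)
Your proposal is correct and takes essentially the same route as the paper: run the redistribution procedure to termination (justified by the two preceding lemmas), negate the applicability condition to get $f_{i+1}-f_i \leq (p_i+p_{i+1})/c$ for each $i$, and telescope. Your write-up is in fact slightly more explicit than the paper's about the coefficient count $2p - p_1 - p_r$ in the telescoping sum.
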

\begin{proof}
Let us perform the redistribution procedure until we are unable to do so. In this case, $f_{i+1}-f_i \leq (p_i + p_{i+1})/c$ for every $1 \leq i \leq r$. If we sum these expressions, we obtain the result.
\end{proof}

This establishes that the difference between the largest and smallest piles can be made close to $2p/c$.

\begin{prop}\label{thm:maxcoinsinpile}
Given a sorting sequence, if there exists a general solution for $f$ fake coins, then there exists a general solution where no pile has more than $(2p-p_1-p_r)/c+f/p-F_{\min}/p$ coins.
\end{prop}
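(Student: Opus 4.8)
The plan is to derive this from Proposition~\ref{thm:mindifferenceinpile} together with the elementary observation that, because $f_1 < f_2 < \dotsb < f_r$, each coordinate satisfies $f_i \ge f_1 + (i-1)$. First I would record that the redistribution procedure preserves the total number of fake coins: a single application removes exactly $(p_ip_{i+1})/c$ fake coins from one weight-class and adds the same number to another, as noted just after the statement of the procedure. Hence, starting from a general solution with $f$ fake coins and running the redistribution procedure to termination, Proposition~\ref{thm:mindifferenceinpile} produces a general solution $(f_1, \dotsc, f_r, f)$ --- with the \emph{same} value of $f$ --- for which $f_r - f_1 \le (2p - p_1 - p_r)/c$.

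Next I would bound the smallest coordinate $f_1$. Since the $f_i$ are strictly increasing non-negative integers, $f_i \ge f_1 + (i-1)$ for all $i$, so
\[
f \;=\; \sum_{i=1}^r p_i f_i \;\ge\; \sum_{i=1}^r p_i\bigl(f_1 + (i-1)\bigr) \;=\; p f_1 + \sum_{i=1}^r (i-1)p_i \;=\; p f_1 + F_{\min},
\]
where the last equality is the formula for $F_{\min}$ from Theorem~\ref{thm:sorting-minmax}. Rearranging gives $f_1 \le (f - F_{\min})/p$.

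Finally I would combine the two estimates. In the general solution produced above, the heaviest pile contains $f_r$ fake coins, and
\[
f_r \;\le\; f_1 + \frac{2p - p_1 - p_r}{c} \;\le\; \frac{2p - p_1 - p_r}{c} + \frac{f}{p} - \frac{F_{\min}}{p},
\]
which is exactly the claimed bound on the maximum number of fake coins in any pile. The argument is short once the ingredients are assembled; there is no real obstacle, but the two points worth stating explicitly are that the redistribution procedure leaves $f$ unchanged (so that Proposition~\ref{thm:mindifferenceinpile} really does hand us a general solution for the \emph{same} $f$), and that one must use the strict monotonicity $f_i \ge f_1 + (i-1)$ rather than merely averaging to get $f_1 \le f/p$, in order to recover the $-F_{\min}/p$ term.
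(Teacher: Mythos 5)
Your proposal is correct and follows essentially the same route as the paper: apply Proposition~\ref{thm:mindifferenceinpile} to get $f_r \leq f_1 + (2p-p_1-p_r)/c$, then bound $f_1 \leq (f-F_{\min})/p$ from $f \geq pf_1 + F_{\min}$ (the paper uses exactly this inequality, written as $f \geq f_1 p + p_2 + 2p_3 + \dotsb + (r-1)p_r$). Your explicit remarks that redistribution preserves $f$ and that strict monotonicity of the $f_i$ is what yields the $-F_{\min}/p$ term are correct and make the argument slightly more self-contained than the paper's version.
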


\begin{proof}
From Proposition~\ref{thm:mindifferenceinpile} we know that $f_r \leq (2p-p_1-p_r)/c + f_1$. Then, because $f \geq  f_1p + p_2 + 2p_3 + \dotsb + (r-1)p_r = f_1p + F_{\min}$, the number of coins in the first pile, $f_1$, is less than or equal to $f/p-F_{\min}/p$. Thus we have that $f_r \leq (2p-p_1-p_r)/c+f/p-F_{\min}/p$.
\end{proof}

We have proven that we can satisfy a height bound on the order of $2p/c+f/p$. Now we can show a value of $f$ for which the height bound does not present an additional obstacle.

\begin{prop}\label{thm:p-heightbound}
Given a sorting sequence, if there exists a general solution for $f \leq pk-(2p^2-p_1p-p_rp)/c+F_{\min}$, then there exists a general solution respecting the height bound.
\end{prop}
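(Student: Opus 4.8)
The plan is to deduce this essentially immediately from Proposition~\ref{thm:maxcoinsinpile}, which already carries out all the combinatorial work via the redistribution procedure. So the first step is simply to invoke it: since a general solution for $f$ fake coins is assumed to exist, there is a general solution $(f_1, \dotsc, f_r)$ for the same sorting sequence in which no pile contains more than $(2p - p_1 - p_r)/c + f/p - F_{\min}/p$ coins; in particular the largest entry $f_r$ is at most this quantity.

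The second step is to check that the hypothesis on $f$ forces this upper bound to be at most $k$, so that the general solution just produced automatically respects the height bound $f_r \le k$. Concretely, starting from $f \le pk - (2p^2 - p_1 p - p_r p)/c + F_{\min}$, divide through by $p > 0$ and rearrange to get $(2p - p_1 - p_r)/c + f/p - F_{\min}/p \le k$. Since $f_r$ is an integer bounded above by the left-hand side, it follows that $f_r \le k$, which is exactly the height bound. Hence $(f_1, \dotsc, f_r)$ is a general solution satisfying the height bound, completing the proof.

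The only points worth stating carefully are the direction of the inequalities in this rearrangement and the fact that the redistributed solution may have a different value of $f_1$ (and hence of the other $f_i$) than the one we started with; but the latter is harmless, because Proposition~\ref{thm:maxcoinsinpile} preserves both the total number $f$ of fake coins and the sorting sequence, which is all that the statement requires. I do not expect any genuine obstacle here: the substantive content lives in the redistribution lemmas of Section~\ref{sec:redistribution} (in particular Proposition~\ref{thm:mindifferenceinpile}), and this proposition is just the bookkeeping step that converts the pile-difference bound into a clean criterion for when the height bound imposes no further constraint.
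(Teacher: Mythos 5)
Your proposal is correct and follows exactly the paper's own argument: invoke Proposition~\ref{thm:maxcoinsinpile} to get a general solution with $f_r \leq (2p-p_1-p_r)/c + f/p - F_{\min}/p$, then observe that the hypothesis on $f$ rearranges to make this bound at most $k$. The extra remarks about the redistributed solution changing the individual $f_i$ while preserving $f$ and the sorting sequence are accurate but not needed beyond what the paper already records.
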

\begin{proof}
If there exists a general solution for $f \leq pk-(2p^2-p_1p-p_rp)/c+F_{\min}$, then by Proposition~\ref{thm:maxcoinsinpile}, there exists a solution where no pile has more than $(2p-p_1-p_r)/c+f/p-F_{\min}/p \leq k$ coins.
\end{proof}

This shows that the height bound for $k$ is satisfied for all solutions when $f$ is less than a value on the order of $pk-2p^2/c$.

Now we are ready to state when the height bound does not interfere, or for which $k$ the converse of Proposition~\ref{thm:necessary} is true.

\begin{theorem}\label{thm:general-to-height-bound}
For $k \geq (4p-2p_1-2p_r)/c+1-r$ if there exists a general solution for $f$ with sorting sequence $S_i$ and a general solution for $pk-f$ with sorting sequence $S_i'$, then there exist solutions satisfying the height bound for both sorting sequences and the respective values.
\end{theorem}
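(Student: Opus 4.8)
The plan is to get the height-bound solutions out of Proposition~\ref{thm:p-heightbound}, applying it to $S_i$ when $f$ is small and to the reversed sequence $S_i'$ when $f$ is large, and to pass solutions between $S_i$ and $S_i'$ using the duality of Lemma~\ref{thm:sorting-duality}. First I would record what Proposition~\ref{thm:p-heightbound} says for $S_i'$: the first and last blocks of equal piles of $S_i'$ have sizes $p_r$ and $p_1$ respectively (reversal swaps them), while the gcd of the block sizes is still $c$; hence the threshold in Proposition~\ref{thm:p-heightbound} applied to $S_i'$ is $pk-(2p^2-p_1p-p_rp)/c+F_{\min}'$, the same expression as for $S_i$ but with $F_{\min}$ replaced by $F_{\min}'$.

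Now fix $f$ as in the statement. If $f \le pk-(2p^2-p_1p-p_rp)/c+F_{\min}$, then, since a general solution for $f$ with $S_i$ exists by hypothesis, Proposition~\ref{thm:p-heightbound} gives a general solution for $f$ with $S_i$ respecting the height bound, and Lemma~\ref{thm:sorting-duality} converts it into a height-bound general solution for $pk-f$ with $S_i'$; both desired solutions then exist. If instead $f > pk-(2p^2-p_1p-p_rp)/c+F_{\min}$, I would work with the value $pk-f$ for $S_i'$: I claim that then $pk-f \le pk-(2p^2-p_1p-p_rp)/c+F_{\min}'$, equivalently $f \ge (2p^2-p_1p-p_rp)/c - F_{\min}'$. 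Granting that, since a general solution for $pk-f$ with $S_i'$ exists by hypothesis, Proposition~\ref{thm:p-heightbound} applied to $S_i'$ yields a height-bound solution for $pk-f$ with $S_i'$, and Lemma~\ref{thm:sorting-duality} converts it to a height-bound solution for $f$ with $S_i$; again both solutions exist.

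The only genuine computation — and the step I expect to be the crux, though it is essentially bookkeeping — is verifying that the two regimes cover every $f$, i.e.\ that the lower bound forcing the second regime lies at or below the threshold defining the first:
\[
\frac{2p^2-p_1p-p_rp}{c} - F_{\min}' \;\le\; pk - \frac{2p^2-p_1p-p_rp}{c} + F_{\min}.
\]
Collecting terms and substituting the identity $F_{\min}+F_{\min}' = p(r-1)$ from the technical lemma above, this inequality is equivalent to $2(2p-p_1-p_r)/c - (r-1) \le k$, which is exactly the hypothesis $k \ge (4p-2p_1-2p_r)/c + 1 - r$. Everything else is assembly of results already established; the small extra care needed is only to observe that the two half-lines of admissible $f$ may overlap rather than merely abut (which only helps the covering) and that the general-solution hypothesis for each of $S_i$ and $S_i'$ is used in exactly one of the two regimes.
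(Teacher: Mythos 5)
Your proposal is correct and follows essentially the same route as the paper: apply Proposition~\ref{thm:p-heightbound} to $S_i$ in the regime of small $f$ and to $S_i'$ in the regime of large $f$, transfer the resulting height-bound solution to the other sequence via Lemma~\ref{thm:sorting-duality}, and check that the two regimes cover all $f$ using the identity $F_{\min}+F_{\min}'=p(r-1)$ together with the hypothesis on $k$. Your explicit observation that reversal swaps $p_1$ and $p_r$ so the threshold expression is symmetric is a small point the paper leaves implicit, but the argument is the same.
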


\begin{proof}
By Proposition~\ref{thm:p-heightbound}, for $f \leq pk-(2p^2-p_1p-p_rp)/c+F_{\min}$, every general solution can be converted into a solution respecting the height bound.

If we apply same logic to the dual equation, that is, to the sorting sequence $S_i'$, we find that for $f' \leq pk-(2p^2-p_1p-p_rp)/c+F'_{\min}$ every general solution for the reversed equation can be converted into a solution respecting the height bound. Then, by the duality of $S_i$ and $S_i'$, this implies that for $(2p^2-p_1p-p_rp)/c-F'_{\min} \leq f$, if there exists a general solution for $f'=pk-f$ for the reversed equation, then there exists a solution respecting the height bound.

If $k \geq (4p-2p_1-2p_r)/c+1-r$, then
\[
pk \geq  (4p^2 -2p_1p -2p_rp)/c - (r-1)p = (4p^2 -2p_1p -2p_rp)/c-F_{\min} - F'_{\min},
\]
and consequently,
\[
(2p^2-p_1p-p_rp)/c-F'_{\min} \leq pk-(2p^2-p_1p-p_rp)/c+F_{\min}.
\]

Thus, any $0 \leq f \leq pk$ satisfies at least one of the equations: $f \leq pk-(2p^2-p_1p-p_rp)/c+F_{\min}$ or $(2p^2-p_1p-p_rp)/c-F'_{\min} \leq f$. That means if a general solution exists for both $f$ and $pk-f$, at least one of them can be converted to a solution satisfying the height bound. By duality, the other equation also has a solution satisfying the height bound.
\end{proof}

Numerical experiments for small values of $p$ indicate that the  conclusion of Theorem~\ref{thm:general-to-height-bound} may be true with a lesser requirement on $k$. Table~\ref{table:conjecture} lists all instances for $p \leq 6$ in which general solutions for $f$ exist for a sorting sequence as well as general solutions for $pk-f$ for the reverse sorting sequence, but there are no solutions respecting the height bound.

\begin{table}[ht]
\centering
\begin{tabular}{cccc}
$p$ & $k$ & Sorting sequence & Values of $f$ \\ \hline
$4$ & $3$ & $(0,1,1,2)$      & $6$ \\
$5$ & $3$ & $(0,1,1,1,2)$    & $7, 8$ \\
$5$ & $4$ & $(0,1,1,1,2)$    & $8, 12$ \\
$5$ & $4$ & $(0,1,1,2,3)$    & $10$ \\
$5$ & $4$ & $(0,1,2,2,3)$    & $10$ \\
$6$ & $3$ & $(0,0,1,1,1,2)$  & $7$ \\
$6$ & $3$ & $(0,1,1,1,1,2)$  & $8, 9, 10$ \\
$6$ & $3$ & $(0,1,1,1,2,2)$  & $11$ \\
$6$ & $4$ & $(0,0,1,1,1,2)$  & $8$ \\
$6$ & $4$ & $(0,0,1,2,2,3)$  & $10$ \\
$6$ & $4$ & $(0,1,1,1,1,2)$  & $9, 10, 14, 15$ \\
$6$ & $4$ & $(0,1,1,1,2,2)$  & $16$ \\
$6$ & $4$ & $(0,1,1,1,2,3)$  & $11, 12$ \\
$6$ & $4$ & $(0,1,1,2,2,3)$  & $11, 13$ \\
$6$ & $4$ & $(0,1,2,2,2,3)$  & $12, 13$ \\
$6$ & $4$ & $(0,1,1,2,3,3)$  & $14$ \\
$6$ & $5$ & $(0,1,1,1,1,2)$  & $10, 15, 20$ \\
$6$ & $5$ & $(0,1,1,1,2,3)$  & $17$ \\
$6$ & $5$ & $(0,1,1,2,3,4)$  & $15$ \\
$6$ & $5$ & $(0,1,2,2,2,3)$  & $13$ \\
$6$ & $5$ & $(0,1,2,2,3,4)$  & $15$ \\
$6$ & $5$ & $(0,1,2,3,3,4)$  & $15$ 
\end{tabular}
\caption{Instances for which there exists a general solutions of $f$ for $S_i$ and a general solution of $pk-f$ for $S_i'$, but not a solution satisfying the height bound, for $p \leq 6$.}
\label{table:conjecture}
\end{table}

Note than in Table~\ref{table:conjecture}, all these exceptions have $p/c>k$. Thus, it may be enough to require $k \geq p/c$. We have computationally verified that this is true up to $p=8$ by checking all sorting sequences for the nonexistence of counterexamples for all $k$ up to $(4p-2p_1-2p_r)/c +1-r$, the value provided by Theorem~\ref{thm:general-to-height-bound}.

\section{The existence of arithmetic progressions for the sorting strategy}\label{sec:arith-progressions}

In the previous section, we established bounds on $f$ for which the existence of a general solution implies the existence of a solution respecting the height bound for $k$. We utilize these results as well as the existence of Frobenius numbers to prove that the sorting strategy can show that there are $f$ coins for all values in an arithmetic progression.

\begin{lemma}\label{thm:upward-range}
Given a sorting sequence with $\gcd(p_1, \dotsc, p_r) = c$, there exists a minimal $\gamma$, divisible by $c$, such that for all $f> \gamma$ that are divisible by $c$, there exists a general solution for $f$.
\end{lemma}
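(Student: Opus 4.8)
The plan is to reduce the existence of a general solution for $f$ to the Frobenius problem via the change of variables introduced in Section~\ref{sec:duality-frobenius}, and then invoke Lemma~\ref{thm:existence-frob-number}.

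Recall that, by the bijection between solutions of~\eqref{partition-equation} and~\eqref{nondecreasing-cond}, a general solution for $f$ exists precisely when $a_1x_1 + \dotsb + a_rx_r = f - F_{\min}$ is solvable in non-negative integers, where $a_i = \sum_{m=r-i+1}^r p_m$. The first step is to compute $\gcd(a_1, \dotsc, a_r)$: since $a_1 = p_r$ and $a_{i+1} - a_i = p_{r-i}$, the integer $a_1$ together with the successive differences $a_{i+1}-a_i$ are exactly $p_r, p_{r-1}, \dotsc, p_1$, so the common divisors of the $a_i$ are precisely the common divisors of the $p_i$; hence $\gcd(a_1,\dotsc,a_r) = c$ and $\gcd(a_1/c,\dotsc,a_r/c) = 1$. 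Therefore the Frobenius number $g := g(a_1/c, \dotsc, a_r/c)$ is well-defined, and by Lemma~\ref{thm:existence-frob-number} the equation $(a_1/c)x_1 + \dotsb + (a_r/c)x_r = m$ has a non-negative solution for every integer $m > g$.

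Next, set $\gamma_0 := F_{\min} + cg$. Because $c$ divides every $p_i$, it divides $F_{\min} = \sum_{i=1}^r (i-1)p_i$, so $\gamma_0$ is divisible by $c$. For any $f > \gamma_0$ with $c \mid f$, the number $(f - F_{\min})/c$ is a non-negative integer greater than $g$, so $(a_1/c)x_1 + \dotsb + (a_r/c)x_r = (f-F_{\min})/c$ has a non-negative solution; multiplying through by $c$ and applying the inverse change of variables $f_i = i-1 + \sum_{j=r-i+1}^r x_j$ yields a general solution for $f$. Thus $\gamma_0$ is a valid threshold.

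Finally, the set $T$ of thresholds that are divisible by $c$ is nonempty (it contains $\gamma_0$) and upward-closed in $c\mathbb{Z}$ (enlarging a threshold only weakens the requirement), and it is bounded below, since any general solution has $f \geq 0$ and, more sharply, $F_{\min}$ is the least value of $f$ admitting a general solution, so no multiple of $c$ strictly below $F_{\min}$ can serve. A nonempty, upward-closed, bounded-below subset of $c\mathbb{Z}$ has a least element, which is the desired minimal $\gamma$. I do not expect a genuine obstacle here: the only steps needing care are the identity $\gcd(a_1,\dotsc,a_r)=c$ and the divisibility $c \mid F_{\min}$, both elementary but essential so that the Frobenius number of the rescaled coefficients is meaningful and so that $\gamma_0$ is itself a multiple of $c$.
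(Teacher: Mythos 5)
Your proof is correct and follows essentially the same route as the paper: reduce to the Frobenius equation via the change of variables, observe $\gcd(a_1,\dotsc,a_r)=c$, and take $\gamma$ just above $F_{\min}$ plus ($c$ times) the relevant Frobenius number. The only cosmetic difference is that you rescale the coefficients $a_i/c$ directly, while the paper passes to the ``reduced sorting sequence'' with $p_i/c$ --- these are the same reduction --- and you are somewhat more explicit about why the minimal such $\gamma$ exists.
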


\begin{proof}
If $\gcd(p_1, \dotsc, p_r)=1$, then $\gcd\{a_i=\sum_{m=r-i+1}^r p_m \mid i = 1, \dotsc, r\} = 1$, for if $d \neq 1$ were a divisor of all the numbers in the latter set, then $d$ would divide all $a_{r-i+1}-a_{r-i} = p_i$. Utilizing Lemma~\ref{thm:existence-frob-number}, let $\beta$ be the Frobenius number for $a_1, \dotsc, a_r$. Now consider \[\gamma =  \beta+\sum_{i=2}^r \sum_{m=i}^r p_m = \beta + F_{\min}.\] For all $f > \gamma$, there exists a solution to~\eqref{nondecreasing-cond}, which is a general solution for $S_i$.

In the case where $c \neq 1$, we can consider the ``reduced sorting sequence" with $p_i'=p_i/c$. By the previous result, there is a minimal $\gamma_0$ such that for all $f'>\gamma_0$, there exists a general solution for the reduced sorting sequence. Each such solution corresponds to a solution with $f=cf'$ for the original sorting sequence, so that for all $f>c\gamma_0=\gamma$ such that $f$ is divisible by $c$, there exists a general solution.
\end{proof}

Lemma~\ref{thm:upward-range} establishes the existence of a unique value $\gamma$ associated with a sorting sequence $S_i$. In what follows, given a sorting sequence $S_i$, we let $\gamma$ be this value, and $\gamma'$ be the corresponding value for $S_i'$. Our previous results allow us to bound the sizes of the value $\gamma$ and the analogous quantity $\gamma'$ for the reverse sorting sequence.

\begin{lemma}\label{thm:gamma-bounds}
Given a sorting sequence $S_i$, $\gamma < (2p^2-p_1p-p_rp)/c-F_{\min}'$ and $\gamma' < (2p^2-p_1p-p_rp)/c-F_{\min}$.
\end{lemma}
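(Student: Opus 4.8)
\noindent\emph{Proof proposal.}
The plan is to bound $\gamma$ directly, using the explicit description of $\gamma$ coming from the proof of Lemma~\ref{thm:upward-range} together with Schur's bound~\eqref{schurbound} on the Frobenius number. Recall from that proof that
\[
\gamma = c\,g(a_1/c,\dots,a_r/c) + F_{\min},\qquad a_i = \sum_{m=r-i+1}^r p_m,
\]
so that in particular $a_1 = p_r$ and $a_r = p$ (when $c=1$ this reads $\gamma = g(a_1,\dots,a_r)+F_{\min}$). The entries $a_i/c$ are strictly increasing positive integers with gcd $1$: any common divisor of the $a_i$ divides each $a_{i+1}-a_i = p_{r-i}$, hence divides $c$. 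So I may apply~\eqref{schurbound} to the tuple $(a_1/c,\dots,a_r/c)$, obtaining
\[
g(a_1/c,\dots,a_r/c) \le (a_1/c-1)(a_r/c-1) - 1 = (p_r/c-1)(p/c-1) - 1 .
\]
Plugging this into the formula for $\gamma$ and simplifying gives the clean bound $\gamma \le p_rp/c - p_r - p + F_{\min}$; call this quantity $B$.

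Next I would show that $B < (2p^2 - p_1p - p_rp)/c - F'_{\min}$, which together with $\gamma\le B$ yields the first claimed inequality. Using the identity $F_{\min}+F'_{\min}=p(r-1)$ from the technical lemma, a short computation reduces the gap between the two sides to
\[
\Big((2p^2 - p_1p - p_rp)/c - F'_{\min}\Big) - B = \frac{p\,(2p - p_1 - 2p_r)}{c} + p_r - p(r-2) .
\]
Here the key observation is that $2p - p_1 - 2p_r = p_1 + 2(p_2 + \dots + p_{r-1})$, and that each $p_j \ge c$ since $c=\gcd(p_1,\dots,p_r)$, so this sum is at least $c(2r-3)$ (the sum $p_2+\dots+p_{r-1}$ being empty when $r=2$). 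Hence the right-hand side is at least $p(2r-3) + p_r - p(r-2) = p(r-1) + p_r > 0$, using $r\ge 2$. Therefore $\gamma \le B < (2p^2-p_1p-p_rp)/c - F'_{\min}$.

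Finally, for the second inequality I would apply the first one to the reverse sorting sequence $S_i'$, which has the same $p$, $r$, and $c$, but pile multiplicities $(p_r,p_{r-1},\dots,p_1)$: its role of ``$p_1$'' is played by $p_r$, its ``$p_r$'' by $p_1$, its ``$F_{\min}$'' by $F'_{\min}$, and its ``$F'_{\min}$'' by $F_{\min}$. The first inequality applied to $S_i'$ then reads $\gamma' < (2p^2 - p_rp - p_1p)/c - F_{\min}$, which is exactly the second claim.

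I do not anticipate a genuine obstacle: the argument is essentially bookkeeping once Schur's bound is inserted into the formula for $\gamma$. The only points needing care are the algebraic simplification leading to $B$, the difference identity in the second paragraph (including the degenerate case $r=2$), and invoking $p_j \ge c$ correctly; verifying $\gcd(a_1/c,\dots,a_r/c)=1$ so that Schur's bound legitimately applies is the one conceptual check, and it is immediate as noted above.
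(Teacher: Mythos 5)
Your proof is correct, but it takes a genuinely different route from the paper's. The paper never invokes Schur's bound here: it fixes a large $k$, uses Lemma~\ref{thm:upward-range} together with Proposition~\ref{thm:p-heightbound} to produce height-bound-respecting solutions for $S_i'$ for all values $pk-f$ with $\gamma' < pk-f \le pk-(2p^2-p_1p-p_rp)/c+F'_{\min}$ divisible by $c$, dualizes these via Lemma~\ref{thm:sorting-duality} into solutions for $S_i$ for all $f \ge (2p^2-p_1p-p_rp)/c-F'_{\min}$ divisible by $c$ (the upper end of the range escapes to infinity with $k$), and concludes by minimality of $\gamma$. That argument is purely structural and needs no explicit Frobenius estimate. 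You instead substitute Schur's bound~\eqref{schurbound} into the explicit expression $\gamma \le c\,g(a_1/c,\dots,a_r/c)+F_{\min}$ from the proof of Lemma~\ref{thm:upward-range} and verify arithmetically that the resulting quantity $B = p_rp/c - p_r - p + F_{\min}$ lies strictly below the claimed threshold; note that $B$ is (up to the $c$-refinement) exactly the first of the three explicit bounds the authors record at the end of Section~\ref{sec:arith-progressions}, so your proof shows the Schur-based bound already implies the lemma. Your route buys an explicit and generally sharper numerical bound on $\gamma$; the paper's route buys independence from any particular Frobenius-number estimate and stays within its duality/redistribution machinery. The steps you flag for care all check out: $\gcd(a_1/c,\dots,a_r/c)=1$ since a common divisor of the $a_i$ divides every $a_{i+1}-a_i=p_{r-i}$ and $a_1=p_r$, hence divides $c$; the identity $F_{\min}+F'_{\min}=p(r-1)$; the estimate $p_1+2(p_2+\cdots+p_{r-1})\ge c(2r-3)$ giving a gap of at least $p(r-1)+p_r>0$ since the paper assumes $r>1$; and the relabeling under reversal (including the degenerate cases $r=2$ and $p_r=c$, where $g=-1$).
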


\begin{proof}
Consider a large $k$ so that $(2p^2-p_1p-p_rp)/c-F_{\min}' < pk-\gamma'$. By Lemma~\ref{thm:upward-range} and Proposition~\ref{thm:p-heightbound}, a solution for $S_i'$ satisfying the height bound exists for any $pk-f$ such that $\gamma' < pk-f \leq pk-(2p^2-p_1p-p_rp)/c+F_{\min}'$ that is divisible by $c$. Then, by Lemma~\ref{thm:sorting-duality}, there exists a solution for $S_i$ satisfying the height bound for every $f$ divisible by $c$ such that  $(2p^2-p_1p-p_rp)/c-F_{\min}'\leq f < pk-\gamma'$. Because the upper limit tends to infinity as $k$ grows, we see that a general solution exists for any $f \geq (2p^2-p_1p-p_rp)/c-F_{\min}$ that is divisible by $c$. By the minimality of $\gamma$, it must be less than $(2p^2-p_1p-p_rp)/c-F_{\min}$.

The second inequality is obtained by applying the result to the reverse sorting sequence.
\end{proof}

\begin{theorem}\label{thm:range}
Given the sorting sequence $S_i$ with $\gcd(p_1, \dotsc, p_r) = c$, when $k \geq (4p-2p_1-2p_r)/c+1-r$, for all $f$ such that $\gamma < f < pk-\gamma'$ and that are divisible by $c$, there can be $f$ fake coins.
\end{theorem}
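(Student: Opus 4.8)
The plan is to assemble three results already in hand: Lemma~\ref{thm:upward-range}, which guarantees a general solution for every $f > \gamma$ divisible by $c$; its mirror applied to $S_i'$, which does the same above $\gamma'$; and Theorem~\ref{thm:general-to-height-bound}, which, once $k \geq (4p-2p_1-2p_r)/c+1-r$, upgrades a matching pair of general solutions (one for $f$ with $S_i$, one for $pk-f$ with $S_i'$) into solutions that actually respect the height bound. The theorem is essentially the bookkeeping corollary that results from chaining these together.

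Concretely, I would fix $f$ with $\gamma < f < pk-\gamma'$ and $c \mid f$, and first record the elementary divisibility fact that $c \mid pk-f$: since $c = \gcd(p_1,\dots,p_r)$ divides each $p_m$, it divides $p = p_1 + \dots + p_r$, hence $c \mid pk$, and therefore $c \mid (pk-f)$. This is the only place the ``divisible by $c$'' hypothesis is genuinely used, but it is what licenses the application of Lemma~\ref{thm:upward-range} to the reverse sequence. Then, from $f > \gamma$ and $c \mid f$, Lemma~\ref{thm:upward-range} yields a general solution for $f$ with sorting sequence $S_i$; from $pk-f > \gamma'$ (which follows from the strict inequality $f < pk-\gamma'$) and $c \mid (pk-f)$, the same lemma applied to $S_i'$ yields a general solution for $pk-f$ with sorting sequence $S_i'$.

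Now the hypothesis $k \geq (4p-2p_1-2p_r)/c+1-r$ is precisely the hypothesis of Theorem~\ref{thm:general-to-height-bound}, so that theorem converts both general solutions into solutions satisfying the height bound for $k$. A general solution for $S_i$ with $f$ fake coins that satisfies the height bound for $k$ is, by definition, a configuration of $p$ piles of size $k$ whose weight relations are encoded by $S_i$ and which contains $f$ fake coins; hence there can be $f$ fake coins, which is the assertion of the theorem.

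I do not expect a serious obstacle here: the work has all been done in Sections~\ref{sec:redistribution} and~\ref{sec:arith-progressions}, and what remains is careful chaining. The only points requiring attention are (i) the divisibility of $pk-f$ by $c$, needed so that Lemma~\ref{thm:upward-range} applies to $S_i'$, and (ii) tracking that the strict inequalities point the right way, i.e. that $f < pk-\gamma'$ really gives $pk-f > \gamma'$. If one wishes to complement the statement, Lemma~\ref{thm:gamma-bounds} shows $\gamma, \gamma' < (2p^2-p_1p-p_rp)/c - F_{\min}$, so the progression $\gamma < f < pk-\gamma'$ is nonempty and of length growing linearly in $k$ once $k$ clears a threshold of the same order as the one assumed; this is worth remarking but is not logically required for the proof.
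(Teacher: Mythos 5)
Your proposal is correct and follows essentially the same route as the paper: invoke Lemma~\ref{thm:upward-range} for $S_i$ and for $S_i'$ to get the two general solutions, then apply Theorem~\ref{thm:general-to-height-bound} to satisfy the height bound. Your explicit check that $c \mid pk-f$ is a small point the paper leaves implicit, and the nonemptiness of the range via Lemma~\ref{thm:gamma-bounds}, which you correctly note is not logically required, is the only piece the paper includes that you defer to a remark.
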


\begin{proof}
By the assumption on the size of $k$ and Lemma~\ref{thm:gamma-bounds},
\[
k \geq (4p-2p_1-2p_r)/c - (r-1) = \frac{(4p^2-2p_1p - 2p_rp)/c - (F_{\min} + F_{\min}')}{p} > \frac{\gamma + \gamma'}{p},
\]
and thus $pk-\gamma' > \gamma$. We know that for all $f$ divisible by $c$ such that $\gamma < f < pk-\gamma$, there exist general solutions for $f$ with sorting sequence $S_i$ and general solutions for $pk-f$ with sorting sequence $S_i'$, so then because of Theorem~\ref{thm:general-to-height-bound}, there exist solutions satisfying the height bound.
\end{proof}

\begin{theorem}\label{thm:range-discreet}
Given a sorting sequence such that $\gcd(p_1, \dotsc, p_r)=c$, there exist $\delta$ and $\delta'$ such that when $k \geq (4p-2p_1-2p_r)/c+2-r$, for all $f$ such that $\delta < f < pk-\delta'$ and that are divisible by $c$, the sorting strategy can discreetly prove that there are $f$ fake coins.
\end{theorem}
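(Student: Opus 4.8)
The plan is to combine the discreetness criterion of Lemma~\ref{thm:sorting-discreetness} with the range of achievable values established in Theorem~\ref{thm:range}. Recall that by Lemma~\ref{thm:sorting-discreetness}, the sorting strategy with sequence $S_i$ discreetly proves that there are $f$ fake coins with $k$ coins per pile precisely when there exist configurations (i.e.\ solutions respecting the height bound) with $f-p$ and with $f$ fake coins for the pile size $k-1$. So I would set $\delta = \gamma + p$ and $\delta' = \gamma' + p$, where $\gamma, \gamma'$ are the quantities from Lemma~\ref{thm:upward-range} attached to $S_i$ and $S_i'$, and show that for every $f$ divisible by $c$ with $\delta < f < pk - \delta'$, both values $f$ and $f-p$ lie in the window to which Theorem~\ref{thm:range} applies at pile size $k-1$.

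First I would observe that $c \mid p$, since $c$ divides each $p_j$ and $p = p_1 + \dotsb + p_r$; hence $f-p$ is divisible by $c$ whenever $f$ is, so the shift by $p$ is compatible with the divisibility hypothesis of Theorem~\ref{thm:range}. Next, the hypothesis $k \geq (4p - 2p_1 - 2p_r)/c + 2 - r$ is exactly the statement that $k-1 \geq (4p - 2p_1 - 2p_r)/c + 1 - r$, which is the hypothesis of Theorem~\ref{thm:range} with pile size $k-1$; thus Theorem~\ref{thm:range} guarantees that for every $g$ divisible by $c$ with $\gamma < g < p(k-1) - \gamma'$ there is a configuration with $g$ fake coins consistent with $S_i$ and $k-1$ coins per pile. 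Now, if $\delta < f < pk - \delta'$, that is $\gamma + p < f < pk - p - \gamma' = p(k-1) - \gamma'$, then $f > \gamma + p > \gamma$ and $f < p(k-1) - \gamma'$, so $f$ is in the window; and $f - p > \gamma$ while $f - p < f < p(k-1) - \gamma'$, so $f-p$ is in the window as well. Therefore configurations with $f-p$ and with $f$ fake coins exist at pile size $k-1$, and Lemma~\ref{thm:sorting-discreetness} yields that the sorting strategy discreetly proves there are $f$ fake coins at pile size $k$.

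The argument is essentially bookkeeping with two shifted copies of the interval from Theorem~\ref{thm:range}, so there is no genuine obstacle; the reference results do all the work. The one point requiring care is that the threshold on $k$ here must be the one appropriate to pile size $k-1$ (because of the reduction in Lemma~\ref{thm:sorting-discreetness}), which is why it exceeds the threshold in Theorem~\ref{thm:range} by exactly $1$. If one wishes, Corollary~\ref{thm:discreetness-duality} can be invoked to check that $\delta$ and $\delta'$ play symmetric roles under passing to $S_i'$, but since the statement only asserts the existence of such $\delta, \delta'$, this is not needed.
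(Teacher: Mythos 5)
Your proof is correct and follows essentially the same route as the paper: set $\delta=\gamma+p$, $\delta'=\gamma'+p$, apply Theorem~\ref{thm:range} at pile size $k-1$, and conclude via Lemma~\ref{thm:sorting-discreetness}. Your version is in fact slightly more careful than the paper's, since you explicitly verify that $c\mid p$ (so $f-p$ keeps the divisibility) and that both $f$ and $f-p$ land in the shifted window.
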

\begin{proof}
Because $k-1 \geq (4p-2p_1-2p_r)/c+1-r$, by Theorem~\ref{thm:range}, there exist $\gamma$ and $\gamma'$ such that for all $f$ divisible by $c$ such that $\gamma < f < p(k-1)-\gamma'$, there exist general solutions satisfying the height bound for $k-1$. If we set $\delta = \gamma + p$ and $\delta' = \gamma'+p$, then if $pk-\delta' > \delta$, by Lemma~\ref{thm:sorting-discreetness}, the sorting strategy can discreetly prove that for all $f$ such that $\delta < f < pk-\delta'$ and that are divisible by $c$, there can be $f$ fake coins with $k$ coins in each pile.
\end{proof}

Together, Theorems~\ref{thm:range}~and~\ref{thm:range-discreet} prove that number of fake coins may be any value within an arithmetic progression with common difference $c$, delimited by a fixed $\gamma$ (and in the case of discreetness, $\delta$) and $pk-\gamma'$ (and in the case of discreetness, $pk-\delta'$). Both arithmetic progressions grow linearly in $k$.

Furthermore, we have managed to describe the possible values for the number of fake coins in the sorting sequence entirely in terms of the Frobenius problem when the number of coins is sufficiently large. As a result of Lemma~\ref{thm:existence-frob-number}, we know that the solutions $n$ to~\eqref{frobenius} consist of a set of exceptional values below the Frobenius number along with all $n$ greater than the Frobenius number. We have shown that when $k$ is sufficiently large, the set of possible numbers of fake coins consists of three parts: first, a set of exceptional values below $\gamma$ which correspond to the exceptional solutions of~\eqref{nondecreasing-cond}; second, an arithmetic progression delimited by $\gamma$ and $pk-\gamma'$, corresponding to the range of values past the Frobenius numbers of equations~\eqref{nondecreasing-cond} and~\eqref{dual-nondecreasing-cond}; third, a set of exceptional values above $pk-\gamma'$ corresponding to the exceptional solutions of~\eqref{dual-nondecreasing-cond}. The values for which the strategy is discreet also inherit this three-part structure.

We illustrate there results with the following example:

\begin{example}
Consider the sorting sequence $(0,1,1,1)$. According to Theorem~\ref{thm:range}, when $k \geq 16 -1 -8 = 7$, for all $f$ such that $8 < f < 4k$, the sorting sequence can prove that there are $f$ fake coins. Moreover, according to Theorem~\ref{thm:range-discreet}, for $k \geq 9$, the sorting strategy is discreet for $f$ in the range $12 < f < 4k-4$ as $\delta=12$ and $\delta'=4$.

We trace the proofs of these two assertions. We have that $p_1 = 1$, $p_2=3$, $F_{\min}=3$, and $F_{\min}'=1$. The corresponding equations~\eqref{nondecreasing-cond} and~\eqref{dual-nondecreasing-cond} are $3x_1 + 4x_2 = n = f - F_{\min}$ and $x_1'+4x_2'=n'=(pk-f)-F_{\min}'$. The Frobenius numbers for the two equations are $g(3,4) =5$ and $g(1,4)=-1$. For the first equation, solutions exist for $n= 0, 3, 4$, and $n > 5$. For the second equation, there is a solution for every $n'>0$. Furthermore, $\gamma=8$ and $\gamma'=0$, with $pk-\gamma' > \gamma$, and thus for any $k \geq 7$, the possible values for $f$ are $\{3,6,7\} \cup \{f | 8 < f < 4k\}$.

Consequently, if $k\geq 8$, we have that when there are $k-1$ coins in each pile, the possible values for $f$ are $\{3,6,7\} \cup \{f | 8 < f < 4(k-1)\}$. Thus, by Lemma~\ref{thm:sorting-discreetness}, we have that for the following values of $f$, the strategy is discreet: $\{4, 10, 11\} \cup \{f | 12 < f < 4(k-1)\}$.

The previous reasoning can be verified with Table~\ref{table:ranges}, in which we list the values of $f$ for which there exists a solution and print in bold those for which the sorting strategy is discreet.

\begin{table}[ht]
\centering
\begin{tabular}{cc}
 $k$ & \text{Values of $f$} \\ \hline
 $7$ & $3, 6, \mathbf{7}, 9, \mathbf{10}, \mathbf{11}, 12, \mathbf{13}, \mathbf{14}, \mathbf{15}, \mathbf{16}, \mathbf{17}, \mathbf{18}, \mathbf{19}, \mathbf{20}, \mathbf{21}, \mathbf{22}, \mathbf{23}, 24, 25, 26, 27$ \\
 $8$ & $3, 6, \mathbf{7}, 9, \mathbf{10}, \mathbf{11}, 12, \mathbf{13}, \mathbf{14}, \mathbf{15}, \mathbf{16}, \mathbf{17}, \mathbf{18}, \mathbf{19}, \mathbf{20}, \mathbf{21}, \mathbf{22}, \mathbf{23}, \mathbf{24}, \mathbf{25}, \mathbf{26}, \mathbf{27},$ \\
 & $28, 29, 30, 31$ \\
 $9$ & $3, 6, \mathbf{7}, 9, \mathbf{10}, \mathbf{11}, 12, \mathbf{13}, \mathbf{14}, \mathbf{15}, \mathbf{16}, \mathbf{17}, \mathbf{18}, \mathbf{19}, \mathbf{20}, \mathbf{21}, \mathbf{22}, \mathbf{23}, \mathbf{24}, \mathbf{25}, \mathbf{26}, \mathbf{27},$ \\
 & $\mathbf{28}, \mathbf{29}, \mathbf{30}, \mathbf{31}, 32, 33, 34, 35$ \\
\end{tabular}
\caption{Values of $f$ for the sorting sequence $(0,1,1,1)$.} \label{table:ranges}
\end{table}
\end{example}

None of the previous theorems provide information about the sizes of $\gamma, \gamma', \delta,$ or $\delta'$. Unfortunately, a formula for $\gamma, \gamma', \delta,$ or $\delta'$ is precluded by the lack of a general expression for the Frobenius number noted in Section~\ref{sec:duality-frobenius}, since not only is~\eqref{nondecreasing-cond} a case of~\eqref{frobenius}, but the latter equation can always be written in the form of~\eqref{nondecreasing-cond}: Because the coefficients $a_1, \dotsc, a_r$ of~\eqref{frobenius} are increasing, by setting $p_i=a_{r-i+1}-a_{r-i}$ and $f=n + \sum_{i=1}^{r-1} a_i$, we can express~\eqref{frobenius} in the form of~\eqref{nondecreasing-cond}.

Nonetheless, we can apply the upper bounds for the Frobenius number provided in Subsection~\ref{sec:frobenius} to calculate three upper bounds for $\gamma$ and $\gamma'$. The bounds for $\gamma$ utilize the coefficients from~\eqref{nondecreasing-cond}, which are
\[
a_1 = p_r, \quad \quad a_{r-1} = \sum_{m=2}^r p_m = p - p_1, \quad \quad a_r = \sum_{m=1}^r p_m = p,
\]
and the bounds for $\gamma'$ utilize the coefficients from~\eqref{dual-nondecreasing-cond}, which are
\[
a_1 = p_1, \quad \quad a_{r-1} = \sum_{m=1}^{r-1} p_m = p - p_r, \quad \quad a_r = \sum_{m=1}^r p_m = p.
\]
the following bounds are the respective applications of~\eqref{schurbound}, \eqref{erdosgrahambound}, and~\eqref{selmerbound} for $\gamma$ and $\gamma'$:

\begin{align*}
\gamma &\leq (p_r-1)p -p_r + \sum_{i=2}^r \sum_{m=i}^r p_m & \gamma' & \leq (p_1-1)p - p_1 + \sum_{i-2}^r \sum_{m=1}^{r-i+1} p_m  \\
\gamma & \leq 2 \left\lfloor \frac{p}{r} \right\rfloor (p-p_1) -p + \sum_{i=2}^r \sum_{m=i}^r p_m & \gamma' & \leq 2 \left \lfloor \frac{p}{r} \right \rfloor (p-p_r) - p + \sum_{i=2}^r \sum_{m=1}^{r-i+1} p_m \\
\gamma &\leq 2\left\lfloor \frac{p_r}{r} \right\rfloor p -p_r + \sum_{i=2}^r \sum_{m=i}^r p_m & \gamma' & \leq 2\left \lfloor \frac{p_1}{r} \right \rfloor p -p_1 + \sum_{i-2}^{r-1} \sum_{m=1}^{r-i+1} p_m
\end{align*}

Theorems~\ref{thm:range}~and~\ref{thm:range-discreet} establish a significant limitation on the power of the sorting strategy to prove definitively to an observer that the number of fake coins is a specific value. As $k$ increases, so does the possible range of values, and furthermore, so does the range of values for which the strategy is discreet. Thus, the power of the sorting strategy to rule out possibilities beyond those that are not divisible by $\gcd(p_1, \dotsc, p_r)$ is limited to a set of extremal values.

\section{The revealing factor for the sorting strategy}\label{sec:revealing-factor}

To calculate the revealing factor in the case of the sorting strategy, we must obtain all the general solutions for a sorting sequence that satisfy the height bound. Let $\mathcal F$ be the set of all $\vec{f}=(f_1, \dotsc, f_r)$ that are solutions for a given sorting sequence. 

We note that in~\cite[Section 6]{DiacoPreprint}, Diaco described a class of weighing strategies in which piles are also divided into classes of equal weight, although the classes are not compared to each other. As a result, Diaco's expression for the revealing factor is similar to that in the following Proposition.

\begin{prop}
Given a sorting sequence $S_i$, if $\mathcal F$ is the set of all general solutions $\vec{f}=(f_1, \dotsc, f_r)$ for $f$ fake coins that satisfy the height bound for $k$, then the revealing factor for the sorting strategy with sorting sequence $S_i$ and $f$ fake coins is
\[
X = \frac{\binom{pk}{f}}{\sum_{\vec{f}\in \mathcal{F}}\prod_{i=1}^p \binom{k}{f_i}^{p_i}}.
\]
\end{prop}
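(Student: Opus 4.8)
The plan is to compute both quantities in the definition of the revealing factor directly, taking $P$ to be the property ``exactly $f$ of the $pk$ coins are fake.'' The count of \emph{old possibilities} is immediate: a coin configuration is a choice of which of the $pk$ labelled coins are fake, and $P$ holds exactly when this set has size $f$, so there are $\binom{pk}{f}$ of them, with nothing further to check.

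For the \emph{new possibilities} — the configurations consistent with the recorded outcome of the sorting strategy and also satisfying $P$ — I would argue as follows. The sorting strategy fixes the partition of the coins into the $p$ piles, and the comparisons it performs determine (and are determined by) the weight preorder on the piles; after relabelling the piles in weight order, which changes nothing by the symmetry permuting pile labels, this preorder is recorded by the sorting sequence $S_i$. Hence a configuration is consistent with the outcome exactly when the $p_1$ heaviest piles each contain the same number $f_1$ of fake coins, the next $p_2$ piles each contain $f_2$, \dots, and the last $p_r$ piles each contain $f_r$, where $0 \le f_1 < f_2 < \dots < f_r$ (strict, since distinct weight classes have distinct weights) and $f_r \le k$ (each pile has $k$ coins). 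Together with the relation $p_1 f_1 + \dots + p_r f_r = f$ coming from $P$, this forces the tuple $\vec f = (f_1,\dots,f_r)$ to be precisely a member of $\mathcal F$, i.e.\ a general solution for $f$ satisfying the height bound for $k$. Conversely, given $\vec f \in \mathcal F$, the $f_i$ fake coins in each of the $p_i$ piles of class $i$ may be chosen freely and independently, in $\binom{k}{f_i}$ ways per pile, so $\vec f$ contributes $\prod_{i}\binom{k}{f_i}^{p_i}$ configurations; since $\vec f$ is recovered unambiguously from any consistent configuration, these contributions range over disjoint sets, and the number of new possibilities is $\sum_{\vec f \in \mathcal F}\prod_{i}\binom{k}{f_i}^{p_i}$.

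Since we are assuming that an outcome with $f$ fake coins was in fact produced, this last sum is nonzero, so the revealing factor is the ratio of the two counts, which is the asserted formula. The only step that is not routine bookkeeping is the identification in the middle: one must verify that ``consistent with the outcome and having $f$ fake coins'' is in bijection with the set of pairs consisting of a tuple $\vec f \in \mathcal F$ together with an independent choice of an $f_i$-element subset in each pile of class $i$ — in particular, that no configuration is counted under two different $\vec f$, and that the strict inequalities together with the height bound are exactly the constraints defining $\mathcal F$. I expect this to be the main (mild) obstacle; everything else is counting.
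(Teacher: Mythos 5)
Your proposal is correct and follows essentially the same approach as the paper: count old possibilities as $\binom{pk}{f}$ and count new possibilities by summing, over each $\vec f \in \mathcal F$, the independent choices of $f_i$ fake coins in each of the $p_i$ piles of class $i$. Your additional care about disjointness of the contributions from distinct $\vec f$ is a worthwhile detail the paper leaves implicit, but it is the same argument.
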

\begin{proof}
The number of old possibilities is clearly $\binom{pk}{f}$. Now we calculate the new possibilities: For every $\vec{f}$, we have that for each $i$, within each of the $p_i$ piles that weigh the same, any $f_i$ of the $k$ coins may be fake. Thus, there are $\sum_{\vec{f}\in S}\prod_{i=1}^r \binom{k}{f_i}^{p_i}$ new possibilities.
\end{proof}

\begin{example}
For the sorting sequence $(0,1,2, \dotsc, p-1)$, the minimum number of fake coins is $(p-1)p/2$. In this case, there is only one solution, $\vec{f}=(0,1,2,\dotsc, p-1)$. Therefore, the revealing factor is \[ \frac{\dbinom{pk}{\binom{p}{2}}}{\prod_{i=1}^p \binom{k}{i}}.\]
\end{example}

In general, calculating all solutions in $\mathcal F$ needs to be done computationally for each specific sorting sequence and value of $f$. However, by employing the equations~\eqref{nondecreasing-cond} and~\eqref{height-bound}, existing methods for calculating solutions to the Frobenius problem can be utilized.

\section{Acknowledgements}
We are grateful to the MIT PRIMES program for supporting this research. We also acknowledge useful conversations with Nicholas Diaco.

\end{document}